\newtheorem{thm}{Theorem}[section]
\newtheorem{lem}[thm]{Lemma}
\newtheorem{prop}[thm]{Proposition}
\theoremstyle{definition}
\theoremstyle{remark}
\numberwithin{equation}{section}
\newcommand{\R}{{\mathbb R}}
\newcommand{\N}{{\mathbb N}}
\renewcommand{\S}{{\mathbb S}}
\renewcommand{\P}{{\mathsf P}}
\newcommand{\be}[1]{\begin{equation}\label{#1}}
\newcommand{\ee}{\end{equation}}
\renewcommand{\(}{\left(}
\renewcommand{\)}{\right)}
\newcommand{\ird}[1]{\int_{\R^d}{#1}\,dx}
\newcommand{\nrm}[2]{\|{#1}\|_{#2}}
\newcommand{\nrmSd}[2]{\|{#1}\|_{\mathrm L^{#2}(\mathbb S^d)}}
\newcommand{\iSd}[1]{\int_{\mathbb S^d}{#1}\,d\mu}
\newcommand{\irdmu}[1]{\int_{\R^d}{#1}\,d\mu}
\newcommand{\D}[1]{\mathsf D_\alpha\kern1pt#1}
\newcommand{\Dstar}[1]{\mathsf D_\alpha^*\kern1pt#1}
\newcommand{\mB}{\mathcal B}
\newcommand{\Mstar}{\mathcal M}
\newcommand{\taustar}{{\mathsf c_\star}}
\newcommand{\nrmS}[2]{\|{#1}\|_{\mathrm L^{#2}(\S^d)}}
\newcommand{\conj}[1]{\\[2pt]\centerline{\emph{#1}}\\[2pt]}
\newcommand{\msc}[1]{\href{https://mathscinet.ams.org/mathscinet/search/mscbrowse.html?sk=default&sk=#1&submit=Chercher}{#1}}
\definecolor{darkred}{rgb}{0.8,0,0}
\begin{document}

\title[Nonlinear flows and entropy methods in functional inequalities]
{Functional inequalities: nonlinear flows and entropy methods as a tool for obtaining sharp and constructive results}

\author[J.~Dolbeault]{Jean Dolbeault}
\address{CEREMADE (CNRS UMR n$^\circ$ 7534), PSL university, Universit\'e Paris-Dauphine, Place de Lattre de Tassigny, 75775 Paris~16, France}
\email{dolbeaul@ceremade.dauphine.fr}
\subjclass{Primary: \msc{46E35}; \msc{35K55}; \msc{35B06}. Secondary: \msc{26D10}; \msc{49J40}; \msc{35B40}; \msc{49K20}; \msc{49K30}; \msc{35J60}; \msc{35J20}; \msc{53C21}.}

\keywords{Interpolation, Gagliardo-Nirenberg inequality, Caffarelli-Kohn-Nirenberg inequality, stability, entropy methods, entropy-entropy production inequality, carr\'e du champ, fast diffusion equation, Harnack Principle, asymptotic behaviour, Hardy-Poincar\'e inequalities, spectral gap, intermediate asymptotics, self-similar Barenblatt solutions, rates of convergence, symmetry, symmetry breaking, bifurcation}

\date{\today}

\begin{abstract} Interpolation inequalities play an essential role in Analysis with fundamental consequences in Mathematical Physics, Nonlinear Partial Differential Equations (PDEs), Markov Processes, \emph{etc.}, and have a wide range of applications in various other areas of Science. Research interests have evolved over the years: while mathematicians were originally focussed on abstract properties (for instance appropriate notions of functional spaces for the existence of weak solutions in PDEs), more qualitative questions (for instance, bifurcation diagrams, multiplicity of the solutions in PDEs and their qualitative behaviour) progressively emerged. The use of entropy methods in nonlinear PDEs is a typical example: in some cases, the optimal constant in the inequality can be interpreted as an optimal rate of decay of an entropy for an associated evolution equation. Much more has been learned by adopting this point of view.

This paper aims at illustrating some of these recent aspect of entropy-entropy production inequalities, with applications to stability in Gagliardo-Nirenberg-Sobolev inequalities and symmetry results in Caffarelli-Kohn-Nirenberg inequalities. Entropy methods provide a framework which relates nonlinear regimes with their linearized counterparts. This framework allows to prove optimality results, symmetry results and stability estimates. Some emphasis will be put on the hidden structure which explain such properties. Related open problems will be listed.\end{abstract}

\maketitle

\newpage
The two main results of Sections~\ref{Sec:FDE} and~\ref{Sec:CKN} are based on recent papers. Stability results on Gagliardo-Nirenberg-Sobolev inequalities have been obtained in collaboration with M.~Bonforte, B.~Nazaret, and N.~Simonov and are collected in~\cite{BDNS2021}. Symmetry results in Caffarelli-Kohn-Nirenberg inequalities rely on a series of papers with M.J.~Esteban, M.~Loss and M.~Muratori, see \cite{Dolbeault2016,Bonforte2017a,Dolbeault2017}. The guideline of this paper is to clarify the interplay of nonlinear interpolation inequalities with their linearized counterparts using entropy methods and fast diffusion flows.

\section{A brief historical perspective}\label{Sec:History}

The goal of this section is to sketch important steps in the development of the theory of some important functional inequalities, in the perspective of entropy methods. It is neither a complete review nor a detailed historical account, but more a personal selection of some results which are highlighted as representative of the evolution of the ideas. Several results have been discovered at least twice, in completely independent papers. Whenever possible, this has been clarified, but such issues are always delicate and the overall information is probably still incomplete.

\subsection{Sobolev and some other interpolation inequalities}

Unless it is specified, we consider functions on $\R^d$. For any $q\in[1,\infty)$, $\nrm fq:=\(\ird{|f|^q}\)^{1/q}$, whilst $\nrm f\infty$ denotes the $\mathrm L^\infty(\R^d)$ norm. 

\subsubsection{Sobolev inequality}

On $\R^d$ with $d\ge3$, the classical \emph{Sobolev inequality} asserts that
\be{SobolevRd}
\nrm{\nabla f}2^2\ge\mathsf S_d\,\nrm f{2^*}^2
\ee
for any smooth and compactly supported function, where $2^*=2\,d/(d-2)$ is the critical exponent and the optimal constant is
\[
\mathsf S_d=\pi\,d\,(d-2)\,\Big(\tfrac{\Gamma\(d/2\)}{\Gamma\(d\)}\Big)^{2/d}=\frac14\,d\,(d-2)\,\frac{2^\frac2d\,\pi^{1+\frac1d}}{\Gamma\big((d+1)/2\big)^{2/d}}=\frac14\,d\,(d-2)\,\big|\mathbb S^d\big|^\frac2d\,.
\]
Inequality~\eqref{SobolevRd} is easily extended to the Beppo-Levi space using standard completion results with respect to the norm $f\mapsto\(\nrm{\nabla f}2^2+\nrm f{2^*}^2\)^{1/2}$, see for instance~\cite{AIF_1954_5_305_0}. Inequality~\eqref{SobolevRd} appears in~\cite{soboleff1938theoreme} and is extremely well understood: see~\cite{Aubin-76,MR601601,MR699419}. The question of the optimality of the constant~$\mathsf S_d$ has anyway a much longer history.

Among radial functions and at least in dimension $d=3$ for the critical exponent $2^*=6$, the solutions of the Euler-Lagrange equation enter in the study of the polytropic and isothermal gas spheres, which are models of gaseous stars in astrophysics and can be traced back to the work of H.~Lane in~\cite{Lane_1870}. A detailed account can be found in~\cite[Chapter IV]{MR0092663}. The explicit computation of the optimal radial functions and the corresponding value of $\mathsf S_d$ appears in~\cite{bliss1930integral}. Further considerations on the linear stability of the radial optimal functions can be found in~\cite{MR0289739}.

The classical papers for the optimal Sobolev inequality are those of T.~Aubin and G.~Talenti~\cite{Aubin-76,Talenti-76}, with an earlier contribution by E.~Rodemich in~\cite{rodemich1966sobolev} which is quoted for instance in~\cite[p.~158]{MR1814364}. These papers rely on \emph{spherically symmetric rearrangements} and particularly on the P\'olya–Szeg\H o principle: we refer to~\cite{MR810619,MR2238193,MR1322313} and references therein for details. An alternative proof based on Riesz' lemma and duality can be found in~\cite{MR471785,MR1817225}. Many other proofs have been established using various techniques like, \emph{e.g.}, mass transport in~\cite{MR2032031}. Some results based on nonlinear flows and \emph{entropy methods}, which completely avoid spherically symmetric rearrangements, will be exposed in Section~\ref{Sec:FDE}.

Optimal functions in~\eqref{SobolevRd} are known as the \emph{Aubin-Talenti functions}. The set~$\mathfrak M$ of these functions is a $(d+2)$-dimensional manifold generated by dilations, translations and multiplications by a constant of
\[
\mathsf g(x):=\(1+|x|^2\)^{-(d-2)/2}\quad\forall\,x\in\R^d\,.
\]
Stability is a delicate issue. As an answer to some of the questions raised in~\cite{MR790771}, G.~Bian\-chi and H.~Egnell proved in~\cite{MR1124290} the existence of~$\mathcal C>0$ such that
\be{Bianchi-Egnell}
\nrm{\nabla f}2^2-\mathsf S_d\,\nrm f{2^*}^2\ge\mathcal C\,\inf_{g\in\mathfrak M}\nrm{\nabla f-\nabla g}2^2\,,
\ee
The existence of $\mathcal C$ is obtained by concentration-compactness methods and by contradiction, hence comes with no estimate. Various refinements starting with~\cite{Cianchi2009} and the more recent results of~\cite{Figalli_2018,MR4048334,figalli2020sharp} have been obtained, although without explicit and constructive estimates on $\mathcal C$. By duality methods inspired by~\cite{MR717827} and flows, quantitative and constructive results were obtained in weak norms in~\cite{MR2915466,MR3227280}. In~\cite{BDNS2021}, a result of stability has been obtained, with a measure of the distance to $\mathfrak M$ by a relative Fisher information, a strong norm: this result will be presented in Section~\ref{Sec:FDE}.

\subsubsection{Gagliardo-Nirenberg inequalities}

In two papers,~\cite{MR102740,MR109940}, E.~Gagliardo and L.~Nirenberg established a number of interpolation inequalities which are now famous. Some historical details can be found in~\cite{Nirenberg_2020}. Here we are interested in the sub-family
\be{GNS}
\nrm{\nabla f}2^\theta\,\nrm f{p+1}^{1-\theta}\ge\mathcal C_{\mathrm{GNS}}(p)\,\nrm f{2p}
\ee
where the exponent $\theta=\frac{d\,(p-1)}{(d+2-p\,(d-2))\,p}$ is fixed by scaling considerations and functions $f$ are taken in the space obtained as the completion of the space of infinitely differentiable functions on $\R^d$ with compact support, with respect to the norm $f\mapsto(1-\theta)\,\nrm f{p+1}+\theta\,\nrm{\nabla f}2$. We shall say that the exponent $p$ is an \emph{admissible exponent} if
\[
p\in(1,+\infty)\quad\mbox{if}\quad d=1\;\mbox{or}\;2\,,\quad p\in(1,p^\star]\quad\mbox{if}\quad d\ge3\quad\mbox{with}\quad p^\star:=\tfrac d{d-2}\,.
\]
In the limit case where $p=p^\star$, $d\ge3$, for which $\theta=1$, we notice that $2\,p^\star=2^*$ and recover~\eqref{SobolevRd} as a limit case of~\eqref{GNS}. Other interesting limits can be obtained: as $p\to1$, Inequality~\eqref{GNS} degenerates according to~\cite{DelPino2002} into the Euclidean logarithmic Sobolev inequality in scale invariant form, while in the limit as $p\to+\infty$ when $d=2$ (see for instance~\cite{del_Pino_2012}), we recover the \emph{Euclidean Onofri inequality}. See~\cite[Chapter~1]{BDNS2021} for more details. Inequality~\eqref{GNS} was proved in~\cite{DelPino2002}. Actually, in~\cite{MR1112572}, an earlier result was stated with a rather complete scheme of a proof, up to the delicate question of the uniqueness. None of the authors of~\cite{DelPino2002} were aware of~\cite{MR1112572} but they had by the time the paper was written fine uniqueness results available from the mathematical literature. Entropy methods are now providing alternative tools which have been exploited in~\cite[Chapter~1]{BDNS2021}. In any case, the key issue is that optimal functions in~\eqref{GNS} can be identified. They are given up to dilations, translations and multiplications by a constant by
\be{Aubin.Talenti}
\mathsf g(x):=\(1+|x|^2\)^{-\frac1{p-1}}\quad\forall\,x\in\R^d\,.
\ee
Hence they generalize the \emph{Aubin-Talenti functions}. For this reason, we shall still use this denomination and denote again by $\mathfrak M$ the corresponding manifold. Actually there are deeper reasons for that than the algebraic expression of $\mathsf g$, that were clarified in~~\cite[Section~6.10]{MR3155209} (also see~\cite[Section~1.3.1]{BDNS2021}). Stability results for~\eqref{GNS} is one of the main achievement of entropy methods based on nonlinear diffusions and will detailed in Section~\ref{Sec:FDE} (see Theorems~\ref{Thm:stabilityDraft2} and~\ref{Thm:Main}).

\subsubsection{Caffarelli-Kohn-Nirenberg inequalities}

The critical inequality
\be{CKN}
\(\ird{\frac{|f|^p}{|x|^{b\,p}}}\)^{2/p}\le\,\mathsf C_{a,b}\ird{\frac{|\nabla f|^2}{|x|^{2\,a}}}
\ee
is valid for any function $f\in\left\{\,f\in\mathrm L^p\big(\R^d,|x|^{-b}\,dx\big)\,:\,|x|^{-a}\,|\nabla f|\in\mathrm L^2\big(\R^d,dx\big)\right\}$ under the conditions that $a\le b\le a+1$ if $d\ge3$, $a<b\le a+1$ if $d=2$, $a+1/2<b\le a+1$ if $d=1$, and $a<a_c:=(d-2)/2$ where the exponent $p=2\,d/(d-2+2\,(b-a))$ is again fixed by scaling considerations. Here $\mathsf C_{a,b}$ denotes the optimal constant. Inequality~\eqref{CKN} was apparently introduced first by V.P.~Il'in in~\cite{Ilyin} but is more known in the literature as the \emph{critical Caffarelli-Kohn-Nirenberg inequalities}, according to~\cite{Caffarelli1984}. An important step in the understanding of this inequality came with~\cite{Catrina2001} where the question of the \emph{symmetry breaking} problem was raised: in~\eqref{CKN}, is optimality achieved among radially symmetric functions ? Notice that partial symmetry results were known before, see, \emph{e.g.}, \cite{Chou-Chu-93,MR1731336}: the point in~\cite{Catrina2001} (also see~\cite{MR2001882}) was to provide a mechanism for proving that symmetry breaking also occurs. If symmetry occurs, it can be proved that the equality case is achieved by
\be{g-CKN}
\mathsf g(x):=\(1+|x|^{(p-2)(a_c-a)}\)^{-\frac2{p-2}}\quad\forall\,x\in\R^d
\ee
and the optimal constant is then given by
\[
\mathsf C_{a,b}=\tfrac2p\,((a_c-a)^{-\frac{p+2}p}\(\frac{(p-2)\,\Gamma\big(\frac{3\,p-2}{2\,(p-2)}\big)}{2\,\sqrt\pi\,\big|\mathbb S^{d-1}\big|\;\Gamma\big(\frac p{p-2}\big)}\)^\frac{p-2}p\,.
\]
Assuming that this was the case, V.~Felli and M.~Schneider systematized the strategy of~\cite{Catrina2001} in~\cite{Felli2003}: by linearizing the functional made of the difference of the two sides in~\eqref{CKN}, they found a negative eigenvalue if $b<b_{\rm FS}(a)$ with
\[
b_{\rm FS}(a):=\frac{d\,(a_c-a)}{2\,\sqrt{(a_c-a)^2+d-1}}+a-a_c\,,
\]
thus proving the linear instability of $\mathsf g$, a contradiction. As a consequence, \emph{symmetry breaking} occurs if $b<b_{\rm FS}(a)$. Reciprocally, one of the major achievements of entropy methods is the following symmetry result.
\begin{thm}[Symmetry \emph{vs.}~symmetry breaking, critical case,~\cite{Dolbeault2016}]\label{Thm:CKN} Assume that $d\ge2$, $a<a_c$, and $b_{\rm FS}(a)\le b\le a+1$ if $a<0$. Then equality in~\eqref{CKN} is achieved if and only if, up to a scaling and a multiplication by a constant, $f=\mathsf g$.\end{thm}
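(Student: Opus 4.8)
The plan is to reformulate inequality~\eqref{CKN} in cylindrical (Emden--Fowler) variables, where symmetry becomes the question of whether the optimal function of the transformed inequality depends only on the "axial" variable $s\in\R$. Writing $x=r\,\omega$ with $r=|x|$, $\omega\in\S^{d-1}$, and setting $r=e^{-s}$, $f(x)=r^{a-a_c}\,v(s,\omega)$, one finds that~\eqref{CKN} is equivalent to a Gagliardo--Nirenberg-type inequality on the cylinder $\mathcal C=\R\times\S^{d-1}$ of the form $\nrm{\partial_s v}{2}^2+\nrm{\nabla_\omega v}{2}^2+\Lambda\,\nrm v{2}^2\ge\mu(\Lambda)\,\nrm v{p}^2$, where $\Lambda=(a_c-a)^2$ and the admissible range translates into $\Lambda$ ranging over a half-line. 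The symmetric optimizers $\mathsf g$ from~\eqref{g-CKN} become, in these variables, the one-dimensional profiles $v_\star(s)=(\cosh(\lambda s))^{-2/(p-2)}$ up to translations in $s$. The claim to prove is that for $b\ge b_{\rm FS}(a)$ (equivalently $\Lambda\le\Lambda_{\rm FS}$), every optimizer of the cylindrical inequality is of this form, which after undoing the change of variables is exactly the statement of Theorem~\ref{Thm:CKN}.

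The heart of the argument is a carr\'e du champ / nonlinear flow computation. I would introduce the fast diffusion flow associated to the problem: writing the Euler--Lagrange equation for the cylindrical functional as a weighted elliptic equation, one considers the corresponding parabolic equation $\partial_t u = \mathcal L u^m$ (in the original, weighted, non-cylindrical formulation this is a fast diffusion equation with the weights $|x|^{-2a}$ and $|x|^{-bp}$, with a suitable exponent $m$ depending on $p$ and the dimensional parameters). Along this flow one monitors a generalized Fisher information $\mathcal I[u]$ and shows, by an integration by parts identity of Bakry--Emery type, that $\frac{d}{dt}\bigl(\mathcal I[u]-\mu\,\mathcal E[u]\bigr)$ is a sum of manifestly nonnegative terms: a term involving the square of a Hessian-type quantity $\bigl\|\nabla^2 P - \tfrac1d(\Delta P)\,\mathrm{Id}\bigr\|^2$, a term carrying the curvature of $\S^{d-1}$, and a crucial "mixed" term whose sign is controlled precisely by the condition $\Lambda\le\Lambda_{\rm FS}$. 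This is where the Felli--Schneider threshold $b_{\rm FS}(a)$ enters not as an obstruction but as the sharp condition making the flow monotone. The monotonicity forces the relative Fisher information to decay to its value at the symmetric Barenblatt-type profile, and a rigidity analysis of the case of equality in the integration by parts (all the nonnegative terms vanish) shows that $u$ must be radial, i.e.\ $v=v_\star$ up to translation.

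Two technical points require care and constitute the main obstacles. First, the integration by parts identity is only formal unless one has enough regularity and decay; the rigorous version requires approximation arguments, cut-offs near the origin and at infinity, and a priori estimates (Harnack-type bounds, uniform positivity away from the singular set) to justify that no boundary terms are lost — this is where the fast diffusion regularity theory does the work. Second, and this is the genuinely delicate issue, one must choose the flow (the exponent $m$, and the precise weights defining $\mathcal I$ and $\mathcal E$) so that the bad-signed contributions combine into something controllable; the correct choice is dictated by a one-parameter family of admissible "dimensions" and it is a nontrivial algebraic fact that the resulting constraint on the parameters is satisfied exactly on $b\ge b_{\rm FS}(a)$. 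I would first carry out the flow and integration by parts at the formal level to isolate this algebraic condition, verify it matches $b_{\rm FS}$, and only then invest in the regularity machinery needed to make the argument rigorous; the boundary case $a<0$ with $b=b_{\rm FS}(a)$, where the flow is only marginally monotone, is handled by a limiting argument together with the rigidity analysis.
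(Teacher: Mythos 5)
Your proposal follows essentially the same route as the paper: a change of variables turning~\eqref{CKN} into a weighted Sobolev-type inequality with an artificial dimension, a fast diffusion flow whose self-similar Barenblatt profiles are the candidate optimizers, a \emph{carr\'e du champ} decomposition of the time derivative of the (Fisher information minus entropy) functional into nonnegative terms whose signs are governed exactly by the Felli--Schneider condition, rigidity from the equality case, and elliptic regularity estimates to justify the integrations by parts. The only cosmetic difference is your use of Emden--Fowler cylindrical coordinates rather than the paper's $s=r^\alpha$ reparametrization, and you yourself revert to the weighted formulation to run the flow, so the two are interchangeable.
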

The result in~\cite{Dolbeault2016} is actually stronger: it is proved that there is no other critical point of the Euler-Lagrange equation, which is a \emph{rigidity result}. The method is not limited to critical inequalities and there is also a family of \emph{subcritical Caffa\-relli-Kohn-Nirenberg inequalities}, exactly as in the case without weights. Similar results have been obtained in~\cite{Bonforte2017a,Bonforte2017,Dolbeault2017} for the inequality
\be{CKNsub}
\(\ird{\frac{|f|^{2\,p}}{|x|^\gamma}}\)^{1/p}\le\,\mathsf C_{\beta,\gamma,p}\(\ird{\frac{|\nabla f|^2}{|x|^\beta}}\)^\theta\(\ird{\frac{|f|^{p+1}}{|x|^\gamma}}\)^{1-\theta}
\ee
with optimal constant $\mathsf C_{\beta,\gamma,p}$, parameters $\beta$, $\gamma$ and $p$ such that
\be{parameters}
\gamma-2<\beta<\frac{d-2}d\,\gamma\,,\quad\gamma\in(-\infty,d)\,,\quad p\in\(1,p_\star\right]\quad\mbox{with}\quad p_\star:=\frac{d-\gamma}{d-\beta-2}\,,
\ee
and an exponent $\theta=\frac{(d-\gamma)\,(p-1)}{p\,(d+\beta+2-2\,\gamma-p\,(d-\beta-2))}$ which is determiend by scaling considerations. The admissible set of parameters $(\beta,\gamma)$ is restricted by the condition $p\le p_\star$, which means
\[
\beta\ge d-2+\frac{\gamma-d}p\,.
\]
If symmetry holds, then the equality case in~\eqref{CKNsub} is achieved by
\[
\mathsf g(x):=\(1+|x|^{2+\beta-\gamma}\)^{-\frac1{p-1}}\quad\forall\,x\in\R^d\,.
\]
The following results are taken from~\cite[Theorem~2]{Bonforte2017a} and in~\cite[Theorem~1.1]{Dolbeault2017}.
\begin{thm}[Symmetry \emph{vs.}~symmetry breaking, critical case,~\cite{Bonforte2017a,Dolbeault2017}] Assume that $d\ge2$ and $\beta$, $\gamma$ and $p$ satisfy~\eqref{parameters}. Symmetry holds in~\eqref{CKNsub} if and only if
\[
\gamma<d\,,\quad\gamma-2<\beta<\frac{d-2}d\,\gamma\quad\mbox{and}\quad\beta\le\beta_{\rm FS}(\gamma):=d-2-\sqrt{(\gamma-d)^2-4\,(d-1)}\,.
\]
\end{thm}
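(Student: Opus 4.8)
The plan is to treat the two implications by completely different means. For the necessity of $\beta\le\beta_{\rm FS}(\gamma)$ I would argue by \emph{linear instability} of the radial candidate, exactly as recalled before Theorem~\ref{Thm:CKN} in the critical case. The function $\mathsf g$ is, up to a scaling and a multiplicative constant, the unique radial critical point of the Euler--Lagrange equation associated with~\eqref{CKNsub}. Compute the second variation at $f=\mathsf g$ of the (logarithmic) deficit functional $f\mapsto\theta\log\!\big(\ird{|\nabla f|^2\,|x|^{-\beta}}\big)+(1-\theta)\log\!\big(\ird{|f|^{p+1}\,|x|^{-\gamma}}\big)-\tfrac1p\log\!\big(\ird{|f|^{2p}\,|x|^{-\gamma}}\big)$, restricted to perturbations of $\mathsf g$ supported on first spherical harmonics on $\mathbb S^{d-1}$. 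After the Emden--Fowler (cylindrical) change of variables this reduces to an explicit one-dimensional Schr\"odinger-type eigenvalue problem in that angular sector, whose lowest eigenvalue turns negative precisely when $\beta>\beta_{\rm FS}(\gamma)$; then $\mathsf g$ fails to be even a local minimiser and symmetry is broken.

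For the sufficiency of $\beta\le\beta_{\rm FS}(\gamma)$ — the substantial half — I would use an \emph{entropy method based on a weighted fast diffusion flow}, in the spirit of Section~\ref{Sec:FDE}. Choose the nonlinearity exponent $m=m(p,\beta,\gamma)$ so that, after a self-similar change of variables, inequality~\eqref{CKNsub} becomes equivalent to the entropy--entropy production inequality $\mathcal I[u]\ge\Lambda\,\mathcal E[u]$, where $\mathcal E$ is a weighted relative $p$-entropy with respect to the Barenblatt-type profile $\mathcal B$ (which corresponds to $\mathsf g$), $\mathcal I$ the associated relative Fisher information, and $\Lambda$ the optimal constant to be characterised. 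Along the flow $\partial_t u=|x|^{\gamma}\,\nabla\!\cdot\big(|x|^{-\beta}\,u\,\nabla u^{m-1}\big)$, written after a self-similar rescaling so that $\mathcal B$ is stationary, one has the standard identity $\tfrac{d}{dt}\mathcal E[u]=-\,\mathcal I[u]$, so it is enough to prove $\tfrac{d}{dt}\big(\mathcal I[u]-\Lambda\,\mathcal E[u]\big)\le0$ together with the convergence $u\to\mathcal B$ as $t\to\infty$. The first point is the \emph{carr\'e du champ} computation: differentiating $\mathcal I[u]$ and integrating by parts repeatedly, one arrives at
\[
\frac{d}{dt}\big(\mathcal I[u]-\Lambda\,\mathcal E[u]\big)\le-\,\mathcal R[u]\,,
\]
where, in terms of the pressure $P:=u^{m-1}$, the remainder $\mathcal R[u]$ is a weighted quadratic form in the Hessian of $P$ and in its radial and angular first derivatives. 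The convergence $u\to\mathcal B$ would follow from a Harnack-type principle and the regularising effect of the weighted flow; combined with $\tfrac{d}{dt}(\mathcal I-\Lambda\mathcal E)\le0$ this yields $\mathcal I[u]\ge\Lambda\,\mathcal E[u]$ for all admissible $u$, hence~\eqref{CKNsub} with optimal $\Lambda$. Tracking equality cases, $\mathcal R[u]\equiv0$ along the flow forces $P$ to be affine in $|x|^{2+\beta-\gamma}$, i.e.\ $f$ radial, which gives the reverse implication and the identification of the extremals.

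The decisive step, and the main obstacle, is the \emph{nonnegativity of $\mathcal R[u]$ precisely in the range $\beta\le\beta_{\rm FS}(\gamma)$}. Decomposing $P$ into spherical harmonics on $\mathbb S^{d-1}$, $\mathcal R[u]$ splits into a purely radial contribution, a manifestly nonnegative combination of squares of second-order derivatives, and an angular contribution whose sign reduces to the sharp Poincar\'e inequality on $\mathbb S^{d-1}$ with constant $d-1$ (the first nonzero eigenvalue of $-\Delta_{\mathbb S^{d-1}}$). Matching the weight-dependent coefficients produced by the integrations by parts against $d-1$ produces exactly $\beta\le\beta_{\rm FS}(\gamma)=d-2-\sqrt{(\gamma-d)^2-4\,(d-1)}$, which is why the symmetry and symmetry breaking regions meet along the same curve; in particular the Felli--Schneider obstruction is present only when the discriminant $(\gamma-d)^2-4\,(d-1)$ is nonnegative, and when it is negative symmetry holds throughout the admissible range~\eqref{parameters}. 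The remaining difficulties are functional-analytic: all integrations by parts must be justified even though $|x|^{-\beta}$ and $|x|^{-\gamma}$ are singular at the origin, so I would run the argument first for regularised and suitably truncated solutions of the flow, establish uniform decay and integrability estimates using weighted Hardy--Poincar\'e inequalities and elliptic regularity for the weighted operator, and only then pass to the limit; and, unlike the critical case of Theorem~\ref{Thm:CKN}, there is no conformal invariance to exploit, so the choice of $m$ and of the self-similar variables, as well as the quantitative relative-entropy estimates guaranteeing $u\to\mathcal B$, must be carried out directly for the whole parameter range~\eqref{parameters}.
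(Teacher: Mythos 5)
Your proposal is correct and follows essentially the same route as the paper and the cited references: symmetry breaking via the Felli--Schneider linear-instability computation, and symmetry via a weighted fast diffusion flow with a \emph{carr\'e du champ} computation whose angular remainder is controlled by the sharp Poincar\'e inequality on $\mathbb S^{d-1}$, the matching of coefficients producing exactly the curve $\beta_{\rm FS}(\gamma)$, with the integrations by parts justified by elliptic regularity as in Section~\ref{Sec:Reg}. The only cosmetic difference is that you keep the weighted flow in the original variables, whereas the paper passes first to the variables $s=r^\alpha$ with artificial dimension $n$ as in Section~\ref{Sec:CKN-S}.
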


\subsection{Branches of solutions}\label{Sec:Branches}

In 1983, H. Br\'ezis and L.~Nirenberg published a paper,~\cite{MR709644}, devoted to the positive solutions of the equation
\be{BNeqn}
-\Delta f+\lambda\,f=f^{p-1}
\ee
in bounded domains, with homogeneous Dirichlet boundary conditions. They were interested in the case of the critical exponent $p=2^*$, $d\ge3$, and related issues of compactness in connection with the Yamabe problem. This paper had a considerable impact. With~\cite{MR867665}, it was a starting point for an amazing effort to construct various solutions by variational methods or using Lyapunov-Schmidt reduction techniques, not only on the Euclidean space but also on Riemannian manifolds (where $\Delta$ has to be understood as the Laplace-Beltrami operator). One of the typical issues is to understand how branches of solutions depend on the parameter $\lambda$ and at which values bifurcations may eventually occur, as part of the larger question of the multiplicity of the solutions and their classification. Non-existence (typically based on Poho\v zaev's identity) and uniqueness results are another related issue, with a series of prominent papers, 
\cite{MR333489,MR615628,MR1134481}, that can now be reinterpreted using entropy methods. In~\cite{MR1134481} for instance, it is proved that~\eqref{BNeqn} written on the sphere $\mathbb S^d$ has a \emph{unique} positive solution, $f\equiv\lambda^{-1/(p-2)}$, if and only if $\lambda\le d/(p-2)$. As a consequence, according to~\cite[Corollary~6.1]{MR1134481} (also see~\cite{MR1230930}), we obtain the inequality 
\be{Ineq:GNS}
\nrmSd{\nabla f}2^2\ge\frac d{p-2}\(\nrmSd fp^2-\nrmSd f2^2\)\quad\forall\,f\in\mathrm H^1(\mathbb S^d,d\mu)
\ee
where $d\mu=\big|\mathbb S^d\big|^{-1}\,dv_g$ denotes the uniform probability measure on the sphere and $p\in(2,2^*)$, with the convention that $2^*=+\infty$ if $d=1$ or $2$. Inequality~\eqref{Ineq:GNS} has been studied in~\cite{MR1134481} by \emph{rigidity} methods, in~\cite{MR1230930} by techniques of harmonic analysis, and using the \emph{carr\'e du champ} method in~\cite{MR1231419,MR1412446,Demange_2008}, for any $p>2$, with improvements and extensions in~\cite{DEKL,1504}. We refer to~\cite{1703} for a review and~\cite{Dolbeault_2020} for most recent results.

If $d\ge3$, the case $p=2^*$ is also covered: Inequality~\eqref{Ineq:GNS} is the Sobolev inequality on the sphere and it is equivalent to~\eqref{SobolevRd}, up to a stereographic projection. Taking into account the normalization of $d\mu$, it is easy to recover the expression of $\mathsf S_d$. Another limit case is the \emph{logarithmic Sobolev inequality} on $\mathbb S^d$,
\be{Ineq:logSob}
\nrmSd{\nabla f}2^2\ge\frac d2\,\iSd{|f|^2\,\log\(\frac{|f|^2}{\nrmSd f2^2}\)}\quad\forall\,f\in\mathrm H^1(\mathbb S^d,d\mu)\setminus\{0\}\,,
\ee
which corresponds to the limit as $p\to2_+$. Remarkably,~\eqref{Ineq:GNS} is also valid for any $p\in[1,2)$ where the case $p=1$ corresponds simply to the Poincar\'e inequality on the sphere. In this range, and actually for any $p\in[1,2)\cup(2,+\infty)$ if $d=1$ or $p\in[1,2)\cup(2,2^\#]$ if $d\ge2$, where 
\[
2^\#:=\frac{2\,d^2+1}{(d-1)^2}
\]
is the \emph{Bakry-Emery exponent}, Inequalities~\eqref{Ineq:GNS} and~\eqref{Ineq:logSob} were proved earlier by the \emph{carr\'e du champ} method of D.~Bakry and M.~Emery in~\cite{MR772092}.

{}From the point of view of elliptic PDEs, the \emph{carr\'e du champ} method amounts to test~\eqref{BNeqn} by $\Delta f+(p-1)\,|\nabla f|^2/f$. After using the Bochner-Lichne\-rowicz-Weitzenb\"ock formula
\be{BLW}
\tfrac12\,\Delta\,(|\nabla f|^2)=|\mathrm {Hess}f|^2+\nabla\cdot(\Delta f)\cdot\nabla f+\mathrm{Ric}(\nabla f,\nabla f)\,,
\ee
the fact that $\mathrm{Ric}(\nabla f,\nabla f)=(d-1)\,|\nabla f|^2$ on $\mathbb S^d$, and a few integrations by parts, we obtain the identity
\[
\iSd{\(\tfrac d{d-1}\,|Q_f|^2+(d-\lambda)\,|\nabla f|^2+\gamma(p)\,\frac{|\nabla f|^4}{f^2}\)}=0
\]
with
\be{Qf}
Q_f:=\mathrm{Hess}f-\frac1d\,\Delta f\,\mathrm{Id}-\frac{d-1}{d+2}\,(p-1)\(\frac{\nabla f\otimes\nabla f}f-\frac1d\,\frac{|\nabla f|^2}f\,\mathrm{Id}\)
\ee
and
\be{gamma(p)}
\gamma(p):=\(\frac{d-1}{d+2}\)^2(p-1)\,(2^\#-p)\,.
\ee
Here we use the notation $|\mathsf A|^2=\sum_{i,j=1}^d\mathsf A_{ij}^2$ if $\mathsf A=(\mathsf A_{ij})_{i,j=1}^d$ is a $d\times d$ matrix. It is therefore clear that the unique positive solution of~\eqref{BNeqn} is a constant if $\lambda\le d$. This result has an interesting consequence in terms of branches of solutions. On $\mathrm H^1(\mathbb S^d,d\mu)$, the functional 
\[
f\mapsto\nrmSd{\nabla f}2^2-\frac\lambda{p-2}\(\nrmSd fp^2-\nrmSd f2^2\)
\]
always admits $f\equiv1$ as a critical point, there is no other positive solution of~\eqref{BNeqn} if $\lambda\le d$ and the functional is nonnegative. For any $\lambda>d$, a simple linearization around the constant solution shows that there is a minimizer which makes the functional negative and $\lambda=d$, which corresponds to the first positive eigenvalue of the Laplace-Beltrami operator, is a bifurcation point. Let us define
\[
\mu(\lambda):=\inf_{u\in\mathrm H^1(\S^d)\setminus\{0\}}\frac{(p-2)\,\nrmSd{\nabla f}2^2+\lambda\,\nrmSd f2^2}{\nrmSd fp^2}\,.
\]
The uniqueness result in the range $\lambda\in[0,d]$ and the bifurcation result at $\lambda=d$ have the following consequences. If $\lambda\le d$, then $\mu(\lambda)=\lambda$, while $\mu(\lambda)<\lambda$ if $\lambda>d$. See Fig.~\ref{F1}.
\setlength\unitlength{1cm}
\begin{figure}[ht]
\begin{picture}(12,4)
\put(3,0){\includegraphics[width=6cm]{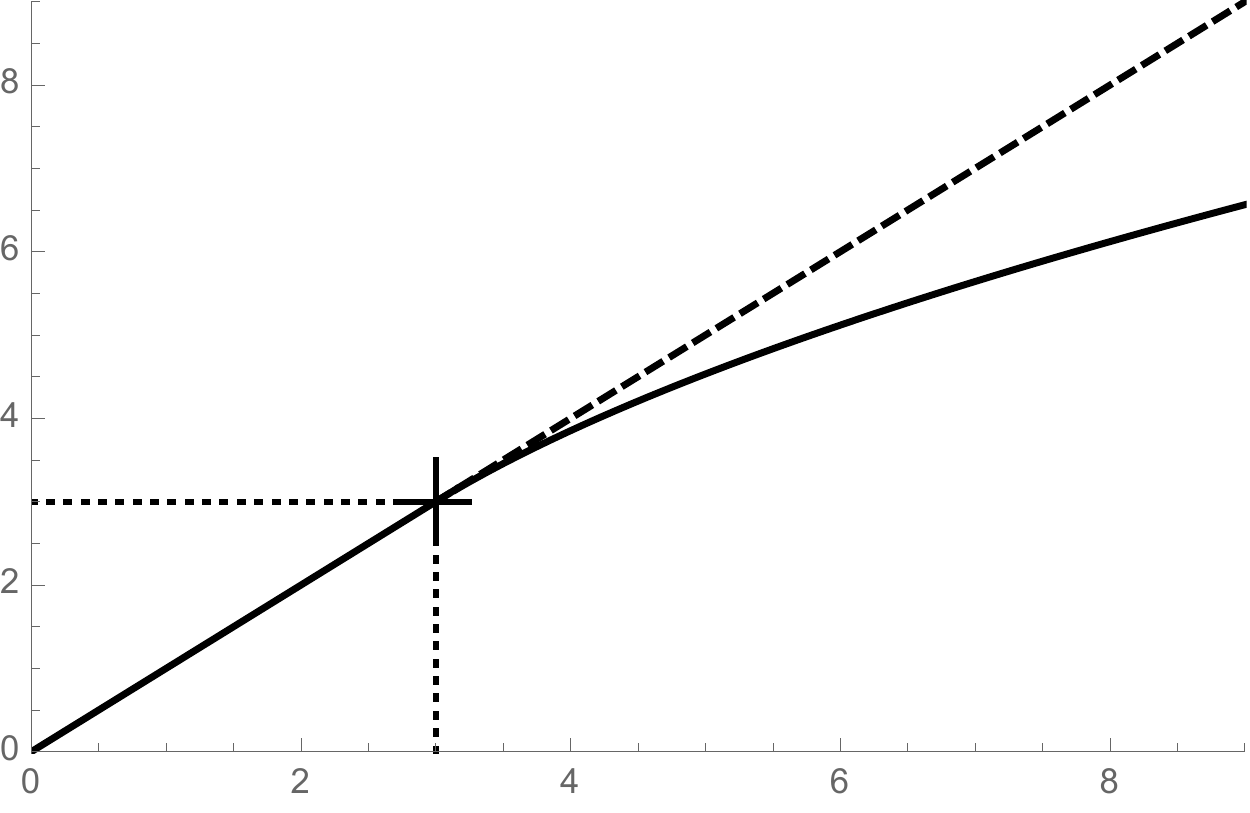}}
\put(8.8,0.5){$\lambda$}
\put(3.4,3.7){$\mu$}
\put(7,1.9){$\mu=\mu(\lambda)$}
\put(7.25,3.5){$\mu=\lambda$}
\end{picture}
\caption{\label{F1}Numerical computation of the bifurcation diagram for $d=3$ and $p=3$. The branch emerging at $\lambda=d$ from the straight line $\mu=\lambda$ of the constant solutions is expected to realize the infimum $\lambda\mapsto\mu(\lambda)$.}
\end{figure}

The carr\'e du champ method is limited to the range $p\le2^\#$, but the cases $p\in(2^\#,+\infty)$ if $d=1$ or~$2$, or $p\in(2^\#,2^*]$ if $d\ge3$ can be dealt with using another test function, as we shall see below: see~\cite{MR1134481,Dolbeault_2014,1504} and references therein for more details. The method admits various extensions, for instance to smooth compact connected Riemannian manifolds, or improvements which can be used in order to obtain stability estimates. For instance, on $\S^d$, the inequality
\be{improvedineq}
\nrmSd{\nabla f}2^2\ge\frac d{2-p-\gamma}\(\nrmSd f2^2-\nrmSd fp^{2-\frac{2\,\gamma}{2-p}}\,\nrmSd f2^{\frac{2\,\gamma}{2-p}}\)\quad\forall\,f\in\mathrm H^1(\mathbb S^d)
\ee
is established in~\cite{Dolbeault_2020} for $d\ge1$, $p\in[1,2)\cup(2,2^\#)$ with the convention $2^\#=+\infty$ if $d=1$ and $\gamma=\gamma(p)$ given by~\eqref{gamma(p)} if $d\ge3$ and $\gamma(p)=(p-1)/3$ if $d=1$, under the condition that $\gamma(p)\neq2-p$. If $\gamma(p)=2-p$, there is a logarithmic version of the inequality. Inequality~\eqref{improvedineq} can be used to obtain estimates on the branches of solutions as in~\cite[Theorem~2]{Dolbeault_2020}.

A major difficulty in the elliptic point of view of~\cite{MR333489,MR615628} is the construction of the appropriate test functions. Reinterpreted in terms of adapted entropy methods and gradient flow estimates, building the estimates becomes a much simpler task. This is what we will explain next.

\subsection{Entropies and \emph{carr\'e du champ} methods}

\emph{Carr\'e du champ} methods were introduced in~\cite{Bakry1985} by D.~Bakry and M.~Emery and have been successfully applied to a wide range of questions: a general overview can be found in~\cite{MR3155209}. Here we give two simple examples, first in the classical setting of linear diffusions with a confinement potential on $\R^d$, and then we revisit the inequalities~\eqref{Ineq:GNS} of Section~\ref{Sec:Branches} dealing with interpolation on $\mathbb S^d$.

\subsubsection{Inequalities, rates of convergence and entropy methods for linear diffusions}\label{Sec:LinearFlows}

On $\R^+\times\R^d$, let us consider the \emph{heat equation}
\be{Heat}
\frac{\partial u}{\partial t}=\Delta u\,,\quad u(t=0,x)=u_0(x)
\ee
and assume that $u_0\in\mathrm L^1\cap\mathrm L^2(\R^d)$ is nonnegative. Among various classical results, it is well known that as $t\to+\infty$, the solution behaves like the Green function $G(t,x)=(4\,\pi\,t)^{-d/2}\,\exp(-|x|^2/(4\,t))$. As a consequence, we know that $\nrm{u(t,\cdot)}2^2=O(t^{-d/2})$ as $t\to+\infty$, which is easily recovered using Nash's inequality:
\be{Nash}
\nrm u2^{2+\frac4d}\le\mathcal C_{\rm Nash}\,\nrm u1^\frac4d\,\nrm{\nabla u}2^2
\ee
For a solution of~\eqref{Heat}, we know that $\nrm{u(t,\cdot)}1=\nrm{u_0}1=:M$ for any $t\ge0$ and can indeed write that
\[
\frac d{dt}\nrm{u(t,\cdot)}2^2=-\,2\,\nrm{\nabla u(t,\cdot)}2^2\le-\,\frac2{\mathcal C_{\rm Nash}}\,\frac{\nrm{u(t,\cdot)}2^{2+4/d}}{M^{4/d}}
\]
and the decay estimate easily follows after integrating with respect to $t$. A slightly more delicate issue is to obtain an \emph{intermediate asymptotics} estimate, for instance an estimate of $u(t,\cdot)-M\,G(t,\cdot)$ in $\mathrm L^1(\R^d)$. This can be done with entropy methods as follows. Using the change of variables
\[
u(t,x)=R(t)^{-d}\,v\(\log R(t),\tfrac x{R(t)}\)\quad\mbox{with}\quad R(t)=\sqrt{1+2\,t}\quad\forall\,(t,x)\in\R^+\times\R^d\,,
\]
solving~\eqref{Heat} is equivalent to solve the \emph{Fokker-Planck equation}
\be{Fokker-Planck}
\frac{\partial v}{\partial t}=\Delta v+\nabla\cdot(x\,v)\,,\quad v(t=0,x)=u_0(x)\,.
\ee
A remarkable fact is that~\eqref{Fokker-Planck} admits a stationary solution $M\,\mu$ with $\mu(x):=G(1/2,x)$. Up to an additional change of unknown function, the function $w(t,x)=v(t,x)/\mu(x)$ solves the \emph{Ornstein-Uhlenbeck equation}
\be{Ornstein-Uhlenbeck}
\frac{\partial w}{\partial t}=\Delta w+x\cdot\nabla w\,,\quad w(t=0,x)=\frac{u_0(x)}{\mu(x)}\,.
\ee
With the Gaussian measure $d\mu=\mu(x)\,dx$, an elementary computations shows that
\[
\frac d{dt}\irdmu{\left|w-M\right|^2}=-\,2\irdmu{\left|\nabla w\right|^2}\le-\,2\irdmu{\left|w-M\right|^2}
\]
where the last inequality is a simple consequence of the \emph{Gaussian Poincar\'e inequality}
\[
\irdmu{\left|f-\bar f\right|^2}\le\irdmu{|\nabla f|^2}\quad\mbox{where}\quad\bar f:=\irdmu f\,,\quad\forall\,f\in\mathrm H^1(\R^d)\,.
\]
It seems that this inequality was explicitly written in that form in~\cite{Nash58}, in the very same paper in which J.~Nash proved~\eqref{Nash}. With this observation and a Gr\"onwall estimate, we obtain the exponential decay of $\irdmu{\left|w-M\right|^2}\ge\(\irdmu{\left|w-M\right|}\)^2$, which controls $\nrm{u(t,\cdot)-M\,G(t,\cdot)}1$. Alternatively, one can directly consider the \emph{entropy} and compute
\[
\frac d{dt}\irdmu{w\,\log(w/M)}=-\irdmu{w\,|\nabla\log w|^2}\le-\,2\irdmu{w\,\log(w/M)}
\]
where the last inequality is a simple consequence of the \emph{Gaussian logarithmic Sobolev inequality}
\[
\irdmu{f^2\,\log(f^2/M)}\le2\irdmu{|\nabla f|^2}\quad\mbox{where}\quad M=\irdmu{f^2}\,,\quad\forall\,f\in\mathrm H^1(\R^d)\,.
\]
Using again a Gr\"onwall estimate and the Pinsker-Csisz\'ar-Kullback inequality
\[
\irdmu{w\,\log(w/M)}\ge\frac 1{4\,M}\(\irdmu{|w-M|}\)^2\,,
\]
we can conclude as before. It turns out that the heat equation can be seen as the gradient flow of the entropy $\ird{u\,\log u}$ with respect to Wasserstein's distance as shown in~\cite{MR1617171} and the entropy formulation turns out to be better adapted when linear diffusion are replaced by nonlinear diffusions. For more details on Nash's inequality, consequences for the heat equation, the comparison of the decay estimates and more detailed references, we refer to~\cite{BDNS2021,Bouin_2020}.

\medskip A natural extension of the above heat related estimates is to consider linear diffusions with confinement due to a given potential $\phi(x)$, which amounts to study the \emph{Ornstein-Uhlenbeck equation} with potential $\phi$, that is,
\be{Ornstein-Uhlenbeck2}
\frac{\partial w}{\partial t}=\Delta w-\nabla\phi\cdot\nabla w\,.
\ee
This equation is reduced to~\eqref{Ornstein-Uhlenbeck} if $\phi(x)=|x|^2/2$. The invariant measure is now $d\mu=Z^{-1}\,e^{-\phi}\,dx$ with $Z=\ird{e^{-\phi}}$ and the \emph{Ornstein-Uhlenbeck operator} $\mathcal L:=\Delta-\nabla\phi\cdot\nabla$ is self-adjoint on $\mathrm L^2(\R^d,d\mu)$ in the sense that
\[
\irdmu{w_1\,\mathcal Lw_2}=-\irdmu{\nabla w_1\cdot\nabla w_2}
\]
As seen above, obtaining convergence rates heavily relies on functional inequalities. One of the key features of entropy methods is that the flow of~\eqref{Ornstein-Uhlenbeck2} can be used to establish these inequalities by the \emph{carr\'e du champ} method, under the assumption that
\be{BLW1}
\mathrm{Hess}\,\phi\ge\kappa\,\mathrm{Id}\quad\mbox{a.e.}
\ee
for some $\kappa>0$. Under this assumption, we claim that
\be{Ineq0}
\irdmu{|\nabla f|^2}\ge\frac\kappa{p-2}\(\(\irdmu{|f|^p}\)^{2/p}-\irdmu{|f|^2}\)\quad\forall\,f\in\mathrm H^1(\R^d,d\mu)
\ee
for any $p\in[1,2)$, and also, by taking the limit as $p\to2_-$ and with the notation $M=\irdmu{|f|^2}$,
\[
\irdmu{|\nabla f|^2}\ge\frac\kappa2\irdmu{|f|^2\,\log\(|f|^2/M\)}\quad\forall\,f\in\mathrm H^1(\R^d,d\mu)\,.
\]
If $w$ solves~\eqref{Ornstein-Uhlenbeck2} with initial datum $w_0=|f|^p$, the strategy of the \emph{carr\'e du champ} method is to notice that $\frac d{dt}\irdmu w=0$ and that $w^2$ converges to the constant $M=\irdmu{|f|^p}$ while
\[
\frac d{dt}\(\irdmu{\left|\nabla w^{1/p}\right|^2}+\frac\kappa{p-2}\irdmu{w^{2/p}}\)\le0\,,
\]
so that
\begin{multline*}
\irdmu{|\nabla f|^2}-\frac\kappa{p-2}\(\(\irdmu{|f|^p}\)^{2/p}-\irdmu{|f|^2}\)\\
\ge\lim_{t\to+\infty}\irdmu{|\nabla w(t,x)^{1/p}|^2}-\frac\kappa{p-2}\(\(\irdmu{w(t,x)^{2/p}}\)^{2/p}-M^{2/p}\)=0\,.
\end{multline*}
The proof relies on a commutation of the derivatives which is is similar to the Bochner-Lichnerowicz-Weitzenb\"ock formula~\eqref{BLW}. Performing a few integrations by parts, we compute for some function $h\in\mathrm H^2(\R^d,d\mu)$ the integral
\begin{align*}
\irdmu{(\mathcal Lh)^2}&=-\irdmu{\nabla h\cdot\nabla(\mathcal Lh)}\\
&=-\irdmu{\nabla h\cdot(\mathcal L\nabla h)}+\irdmu{\nabla h\cdot[\mathcal L,\nabla]\,h}\\
&=\irdmu{|\mathrm{Hess}h|^2}+\irdmu{\nabla h\cdot[\mathcal L,\nabla]\,h}\,.
\end{align*}
Since $\nabla h\cdot[\mathcal L,\nabla]\,h=\mathrm{Hess}\,\phi:\nabla h\otimes\nabla h\ge\kappa\,|\nabla h|^2$ according to~\eqref{BLW1}, this proves
\be{Id1-OU}
\irdmu{(\mathcal Lh)^2}\ge\irdmu{|\mathrm{Hess}h|^2}+\kappa\irdmu{|\nabla h|^2}\,.
\ee
Another elementary identity is given by an integration by parts:
\be{Id2-OU}
\irdmu{(\mathcal Lh)\,\frac{|\nabla h|^2}h}=\irdmu{\frac{|\nabla h|^4}{h^2}}-2\irdmu{\mathrm{Hess}h:\frac{\nabla h\otimes\nabla h}h}\,.
\ee
Hence with $h=w^{1/p}$ and $w$ solving~\eqref{Ornstein-Uhlenbeck2}, so that $h$ solves
\[
\frac{\partial h}{\partial t}=\mathcal Lh+(p-1)\,\frac{|\nabla h|^2}h
\]
with initial datum $h(t=0,\cdot)=|f|$, we find that
\[
\frac d{dt}\irdmu{w^{2/p}}=\frac d{dt}\irdmu{h^2}=-\,2\,(2-p)\irdmu{|\nabla h|^2}\,,
\]
\[
\frac d{dt}\irdmu{\left|\nabla w^{1/p}\right|^2}=\frac d{dt}\irdmu{\left|\nabla h\right|^2}=-\,2\irdmu{(\mathcal Lh)\(\mathcal Lh+(p-1)\,\frac{|\nabla h|^2}h\)}
\]
Altogether,
\begin{multline*}
\frac d{dt}\(\irdmu{\left|\nabla w^{1/p}\right|^2}+\frac\kappa{p-2}\irdmu{w^{2/p}}\)\\
=-\,2\irdmu{\left|\mathrm{Hess}h-(p-1)\,\frac{\nabla h\otimes\nabla h}h\right|^2}-\,2\,(p-1)\,(2-p)\irdmu{\frac{|\nabla h|^4}{h^2}}\le0\,,
\end{multline*}
which completes the proof of~\eqref{Ineq0}. We refer to \cite{MR2081075,MR2435196,doi:10.1142/S0218202518500574} for further details.

Condition~\eqref{BLW1} is the standard criterion in the \emph{carr\'e du champ} method of D.~Bakry and M.~Emery, but it is a pointwise condition which is by far too strong, as we use it only after an integration. See~\cite{MR2435196} for some considerations on non-local, spectral conditions. There is actually a deeper property, for the following reason. Let us assume that $\kappa_1>0$ is the optimal constant in
\be{Ineq1}
\irdmu{|\nabla f|^2}\ge\frac{\kappa_1}{p-2}\(\(\irdmu{|f|^p}\)^{2/p}-\irdmu{|f|^2}\)\quad\forall\,f\in\mathrm H^1(\R^d,d\mu)
\ee
if $p\in[1,2)$, and in
\be{Ineq2}
\irdmu{|\nabla f|^2}\ge\frac{\kappa_1}2\irdmu{|f|^2\,\log\(|f|^2/M\)}\quad\forall\,f\in\mathrm H^1(\R^d,d\mu)
\ee
if $p=2$, with $M=\irdmu{|f|^2}$. Now, let us assume that $\kappa_2>0$ is the optimal constant in
\begin{multline}\label{Ineq3}
\irdmu{\left|\mathrm{Hess}f-(p-1)\,\frac{\nabla f\otimes\nabla f}f\right|^2}+(p-1)\,(2-p)\irdmu{\frac{|\nabla f|^4}{f^2}}\\
+\irdmu{\mathrm{Hess}\,\phi:\nabla f\otimes\nabla f}\ge\kappa_2\irdmu{|\nabla f|^2}\quad\forall\,f\in\mathrm H^2(\R^d,d\mu)\,.
\end{multline}
Finally, let us consider the Poincar\'e inequality
\be{Ineq4}
\irdmu{|\nabla f|^2}\ge\kappa_0\(\irdmu{|f|^2}-\(\irdmu f\)^2\)\quad\forall\,f\in\mathrm H^1(\R^d,d\mu)
\ee
and assume that $\kappa_0>0$ is the optimal constant. 
\begin{prop}\label{Prop:New1} Let $d\ge1$, $p\in[1,2)$, $e^{-\phi}\in\mathrm L^1(\R^d)$, and assume that the probability measure $d\mu=Z^{-1}\,e^{-\phi}\,dx$ with $Z=\ird{e^{-\phi}}$ is such that~\eqref{Ineq4} holds for some $\kappa_0>0$. With the above notations, we have $\kappa_1>0$ and $\kappa_2\le\kappa_1\le\kappa_0$. Moreover, if $\kappa_2=\kappa_1$, then $\kappa_2=\kappa_1=\kappa_0$.\end{prop}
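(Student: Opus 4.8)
First I would dispose of the two easy bounds. For $\kappa_1>0$, since replacing $f$ by $|f|$ does not increase $\irdmu{|\nabla f|^2}$ it suffices, by $2$-homogeneity, to take $f\ge0$ with $\irdmu{f}=1$; Jensen's inequality applied to $t\mapsto t^p$ and $t\mapsto t^2$ gives $\(\irdmu{|f|^p}\)^{2/p}\ge1$ and $\irdmu{|f|^2}\ge1$, so that
\[
\irdmu{|f|^2}-\(\irdmu{|f|^p}\)^{2/p}\le\irdmu{|f|^2}-\(\irdmu{f}\)^2\le\tfrac1{\kappa_0}\,\irdmu{|\nabla f|^2}
\]
by~\eqref{Ineq4}; this is exactly~\eqref{Ineq1} with the admissible constant $(2-p)\,\kappa_0>0$, whence $\kappa_1\ge(2-p)\,\kappa_0$. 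For $\kappa_1\le\kappa_0$ I would linearise~\eqref{Ineq1} at $f\equiv1$: inserting $f=1+\varepsilon\,g$ with $\irdmu{g}=0$ and expanding, one finds $\irdmu{|\nabla f|^2}=\varepsilon^2\,\irdmu{|\nabla g|^2}+o(\varepsilon^2)$ and $\(\irdmu{|f|^p}\)^{2/p}-\irdmu{|f|^2}=-(2-p)\,\varepsilon^2\,\irdmu{|g|^2}+o(\varepsilon^2)$, so dividing by $\varepsilon^2$ and letting $\varepsilon\to0$ turns~\eqref{Ineq1} into the Poincar\'e inequality~\eqref{Ineq4} with constant $\kappa_1$; since $\kappa_0$ is the largest admissible constant in~\eqref{Ineq4}, $\kappa_1\le\kappa_0$.

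For $\kappa_2\le\kappa_1$ I would rerun the \emph{carr\'e du champ} computation of the previous subsection for the flow~\eqref{Ornstein-Uhlenbeck2}, this time keeping the term $\mathrm{Hess}\,\phi$ rather than using the pointwise bound~\eqref{BLW1}. With $w(0,\cdot)=|f|^p$ and $h=w^{1/p}$ (so that $\partial_t h=\mathcal Lh+(p-1)\,|\nabla h|^2/h$), the identity $\irdmu{(\mathcal Lh)^2}=\irdmu{|\mathrm{Hess}\,h|^2}+\irdmu{\mathrm{Hess}\,\phi:\nabla h\otimes\nabla h}$ underlying~\eqref{Id1-OU}, together with~\eqref{Id2-OU}, gives the exact identity
\[
\frac d{dt}\(\irdmu{\left|\nabla w^{1/p}\right|^2}+\frac\kappa{p-2}\,\irdmu{w^{2/p}}\)=-\,2\,\Big(\mathsf Q_p[h]-\kappa\,\irdmu{|\nabla h|^2}\Big),
\]
where $\mathsf Q_p[h]:=\irdmu{(\mathcal Lh)^2}+(p-1)\,\irdmu{(\mathcal Lh)\,|\nabla h|^2/h}=\irdmu{(\mathcal Lh)\,\partial_t h}$ coincides, by expanding the square in~\eqref{Ineq3} and using~\eqref{Id1-OU}--\eqref{Id2-OU}, with the left-hand side of~\eqref{Ineq3}. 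Choosing $\kappa=\kappa_2$ makes this derivative $\le0$ along the flow, so the functional is non-increasing; letting $t\to+\infty$ (where $w\to\irdmu{|f|^p}$ and $\irdmu{\left|\nabla w^{1/p}\right|^2}\to0$) recovers~\eqref{Ineq1} with $\kappa_2$ in place of $\kappa_1$, and optimality of $\kappa_1$ forces $\kappa_2\le\kappa_1$.

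The hard part is the rigidity. Assume $\kappa_2=\kappa_1$; if $p=1$ then~\eqref{Ineq1} applied to $|f|$ is~\eqref{Ineq4}, so $\kappa_1=\kappa_0$ and there is nothing to prove. Let $p\in(1,2)$ and suppose, for contradiction, that $\kappa_1<\kappa_0$. Pick an optimiser $f_\star>0$ of~\eqref{Ineq1}; by the linearisation above it cannot be constant, since near-constant functions have Rayleigh quotient at least $\kappa_0-o(1)>\kappa_1$. Running the flow above from $w(0,\cdot)=f_\star^p$ with $\kappa=\kappa_1=\kappa_2$, equality in~\eqref{Ineq1} forces the non-increasing functional $\irdmu{\left|\nabla w^{1/p}\right|^2}+\tfrac{\kappa_1}{p-2}\,\irdmu{w^{2/p}}$ to be constant in $t$; hence $\mathsf Q_p[h(t)]=\kappa_1\,I(t)$ for all $t\ge0$, where $I(t):=\irdmu{|\nabla h(t)|^2}$, and since $\tfrac d{dt}I=-2\,\mathsf Q_p[h]$ this gives $I(t)=I(0)\,e^{-2\kappa_1 t}$ with $I(0)=\irdmu{|\nabla f_\star|^2}>0$. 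But $h(t)\to\big(\irdmu{f_\star^p}\big)^{1/p}>0$ as $t\to+\infty$, and near this constant the flow is governed by the Ornstein--Uhlenbeck operator $\mathcal L$, whose spectral gap is $\kappa_0$; so the Fisher information $I(t)$ cannot decay slower than $e^{-2\kappa_0 t}$, i.e.\ $\limsup_{t\to+\infty}t^{-1}\log I(t)\le-2\kappa_0<-2\kappa_1$, which contradicts $I(t)=I(0)\,e^{-2\kappa_1 t}$ with $I(0)>0$. Therefore $\kappa_1=\kappa_0$, and hence $\kappa_2=\kappa_1=\kappa_0$. The two delicate points, both confined to this last step, are the existence, positivity and regularity of the optimiser $f_\star$ under the sole hypothesis that~\eqref{Ineq4} holds (possibly requiring a mild integrability condition on $e^{-\phi}$ or a concentration-compactness argument), and the justification that the nonlinear flow inherits the sharp linear decay rate $2\kappa_0$ of the Fisher information near equilibrium — precisely the Harnack/regularity-type input that the entropy and intermediate-asymptotics methods are designed to provide.
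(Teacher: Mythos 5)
Your proofs of the three inequalities match the paper's: $\kappa_1\ge(2-p)\,\kappa_0>0$ via Jensen/H\"older, $\kappa_1\le\kappa_0$ via linearization at $f=1+\varepsilon\,g$ (your expansion of $\big(\irdmu{|f|^p}\big)^{2/p}-\irdmu{|f|^2}$ is correct), and $\kappa_2\le\kappa_1$ via the exact carr\'e du champ identity along~\eqref{Ornstein-Uhlenbeck2}. Your dispatch of $p=1$ (where~\eqref{Ineq1} is literally~\eqref{Ineq4}) suffices for the conditional statement being proved.

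The rigidity step for $p\in(1,2)$ is where you diverge and where the genuine gap sits. Your contradiction argument opens with ``pick an optimiser $f_\star$ of~\eqref{Ineq1}'', but under the hypothesis $\kappa_2=\kappa_1$ no non-constant optimiser exists --- that is exactly what your own flow argument (and the paper's) establishes --- and constants are inadmissible since the quotient degenerates to $0/0$. So the case you actually handle is vacuous, and the entire burden shifts to the unproven implication ``$\kappa_1<\kappa_0$ forces existence of a minimizer'', a concentration-compactness statement depending on $\phi$ that you flag but do not supply. The paper inverts the logic: it first shows that a non-constant optimiser would make the functional $t\mapsto\irdmu{|\nabla w^{1/p}|^2}+\tfrac{\kappa_1}{p-2}\irdmu{w^{2/p}}$ constant along the flow, yielding $\irdmu{f^{-2}\,|\nabla f|^4}=0$ at $t=0$, a contradiction; it then takes a minimizing sequence, which must consequently degenerate as $f_n=1+\varepsilon_n\,h_n$ with $\varepsilon_n\to0$, and reads off $\kappa_1=\lim\mathcal Q_p[1+\varepsilon_n h_n]=\lim\mathcal Q_1[h_n]\ge\kappa_0$. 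Both routes ultimately rest on a compactness dichotomy (the paper defers its version to~\cite{BDNS2021}), but the paper extracts $\kappa_1\ge\kappa_0$ from the degenerating sequence rather than from a minimizer, which is the correct side of the dichotomy here. Your second acknowledged gap --- that the nonlinear flow inherits the linear decay rate $e^{-2\kappa_0 t}$ of the Fisher information near the constant equilibrium, needed to contradict $I(t)=I(0)\,e^{-2\kappa_1 t}$ --- is also real and not cheap: the dissipation identity only yields the rate $2\kappa_2=2\kappa_1$ directly, and upgrading it to $2\kappa_0$ requires quantitative control of the quadratic term $(p-1)\,|\nabla h|^2/h$ in the asymptotic regime. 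The paper's $t=0$ argument bypasses this entirely.
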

\begin{proof} By H\"older's inequality, we know that $\(\irdmu{|f|}\)^2\le\(\irdmu{|f|^p}\)^{2/p}$, so that $\kappa_1\ge\kappa_0\,(2-p)>0$. 

The fact that $\kappa_2\le\kappa_1$ is part of the standard carr\'e du champ method and has already been observed above: we know that \eqref{Ineq1} and~\eqref{Ineq2} are consequences of~\eqref{Ineq3}. Inequality~\eqref{Ineq1} or~\eqref{Ineq2} applied to the function $h=1+\varepsilon\,f$ gives, at leading order in the limit as $\varepsilon\to0$, the Poincar\'e inequality
\[
\irdmu{|\nabla f|^2}\ge\kappa_1\(\irdmu{|f|^2}-\(\irdmu f\)^2\)\quad\forall\,f\in\mathrm H^1(\R^d,d\mu)\,,
\]
where the constant $\kappa_1$ is not necessarily optimal: $\kappa_1\le\kappa_0$.

If $p=1$, we have $\kappa_2=\kappa_1=\kappa_0$ which easily follows from an expansion of the square: let $f\in\mathrm H^1(d\mu)$ be such that $\irdmu f=0$. Using the fact that $\irdmu{(\mathcal Lf+\kappa_0\,f)^2}\ge0$, we find that
\begin{multline*}
\irdmu{\left|\mathrm{Hess}f\right|^2}+\irdmu{\mathrm{Hess}\,\phi:\nabla f\otimes\nabla f}-\kappa_0\irdmu{|\nabla f|^2}\\
\ge\kappa_0\(\irdmu{|\nabla f|^2}-\kappa_0\irdmu{|f|^2}\)\ge0\,,
\end{multline*}
which proves that $\kappa_2\ge\kappa_0$. In the remainder of the proof, we assume that $p\in(1,2)$.

In the equality case $\kappa_2=\kappa_1$, we read from the previous computations that there is no non-constant optimal function for~\eqref{Ineq1}. This fact is easily obtained as follows: if $f$ is an optimal function for~\eqref{Ineq1}, let us take $w_0=|f|^p$ and consider the solution~$w$ of~\eqref{Ornstein-Uhlenbeck2} with initial datum $w_0$. At $t=0$, we obtain $\irdmu{f^{-2}\,|\nabla f|^4}=0$, a contradiction. Hence, if we consider a minimizing sequence $(f_n)_{n\in\N}$ for
\[
\kappa_1=\inf\mathcal Q_p[f]\quad\mbox{where}\quad\mathcal Q_p[f]:=\frac{(2-p)\irdmu{|\nabla f|^2}}{\irdmu{|f|^2}-\(\irdmu{|f|^p}\)^{2/p}}\,,
\]
we can write that $f_n=1+\varepsilon_n\,h_n$ for any $n\in\N$, with $\lim_{n\to+\infty}\varepsilon_n=0$ and $(h_n)_{n\in\N}$ uniformly bounded in $\mathrm H^1(\R^d\,d\mu)$, so that
\[
\kappa_0\ge\kappa_1=\lim_{n\to+\infty}\mathcal Q_p[1+\varepsilon_n\,h_n]=\lim_{n\to+\infty}\mathcal Q_1[h_n]\ge\kappa_0\,.
\]
For further details in a similar problem, see~\cite{BDNS2021}.
\end{proof}

It is possible to prove various other estimates. For instance, the function
\[
p\mapsto\frac{2\,p}{2-p}\(\irdmu{|f|^2}-\(\irdmu{|f|^p}\)^{2/p}\)
\]
is non-decreasing according to~\cite[Lemma~1]{MR1796718}, so that we have the bounds
\[
(2-p)\,\kappa_0\le\kappa_1\le p\,\kappa_0\,.
\]
If the logarithmic Sobolev inequality~\eqref{Ineq2} holds for some $\kappa_1>0$, then the Poincar\'e inequality~\eqref{Ineq4} holds for some $\kappa_0\ge\kappa_1$ and~\eqref{Ineq1} holds for some constant which can be estimated in terms of $\kappa_0$ and $\kappa_1$. Additionally, if $\kappa_0=\kappa_1$, then $\kappa_0$ is also the optimal constant in~\eqref{Ineq1} and does not depend on $p\in[1,2]$: see~\cite{MR1796718,0528}. This occurs if $\phi(x)=|x|^2/2$: in that case, we have $\kappa_2=\kappa_1=\kappa_0$, for any $p\in[1,2]$. 

The \emph{carr\'e du champ} method gives only a sufficient condition to establish~\eqref{Ineq1} or~\eqref{Ineq2}, and the constant $\kappa$ in~\eqref{BLW1} is then a lower estimate for $\kappa_2$. In some cases, it can be proved by a direct computation that $\kappa=\kappa_2$ and then \emph{all optimal constants} in~\eqref{Ineq1}-\eqref{Ineq4} are known: this is the case for the harmonic potential and, as we shall see next, we have a similar picture in all cases of interest considered in this paper. For this reason, the method provides sharp results. From the point of view of the flow, if $w$ solves~\eqref{Ornstein-Uhlenbeck2} with initial datum $w_0=|f|^p$, for some non-constant function $f\in\mathrm H^1(\R^d,d\mu)$, then we have
\[
\mathcal Q_p[f]\ge\lim_{t\to+\infty}\mathcal Q_p\left[w(t,\cdot)^{1/p}\right]=\lim_{t\to+\infty}\mathcal Q_1\left[w(t,\cdot)^{1/p}\right]\ge\kappa_0\,.
\]
Then $\kappa_1=\inf\mathcal Q_p[f]=\kappa_0$ is recovered using $f=1+\varepsilon\,h$ as a test function and taking the limit as $\varepsilon\to0$, where $h$ is optimal for~\eqref{Ineq4}.

\subsubsection{Flows and interpolation inequalities on the sphere}\label{Sec:SphereFlows}

On the sphere $\mathbb S^d$, exactly the same method as for the Ornstein-Uhlenbeck equation~\eqref{Ornstein-Uhlenbeck2} applies. If we consider the heat equation
\[
\frac{\partial w}{\partial t}=\Delta w\,,\quad w(t=0,\cdot)=f^p\,,
\]
where $\Delta$ is the Laplace-Beltrami operator or, with $h=w^{1/p}$, the equation
\[
\frac{\partial h}{\partial t}=\Delta h+(p-1)\,\frac{|\nabla h|^2}h\,,\quad h(t=0,\cdot)=f\,,
\]
then we find that
\[
\frac d{dt}\iSd{h^p}=0\,,
\]
\[
\frac d{dt}\iSd{h^2}=-\,2\,(2-p)\iSd{|\nabla h|^2}\,,
\]
\[
\frac d{dt}\iSd{|\nabla h|^2}=-\,2\iSd{(\Delta h)\(\Delta h+(p-1)\,\frac{|\nabla h|^2}h\)}\,.
\]
The strategy of the computation is similar to the case of Equation~\eqref{Ornstein-Uhlenbeck2}. We replace~\eqref{BLW1} by the Bochner-Lichnerowicz-Weitzenb\"ock formula~\eqref{BLW} applied to~$h$. After an integration with respect to $d\mu$, this proves
\be{BLW2}
\iSd{(\Delta h)^2}=\iSd{|\mathrm{Hess}h|^2}+(d-1)\iSd{|\nabla h|^2}\,.
\ee
After a number integrations by parts, we obtain
\begin{multline*}
\frac d{dt}\left[\nrmSd{\nabla h}2^2-\frac\lambda{p-2}\(\nrmSd hp^2-\nrmSd h2^2\)\right]\\
=-\,2\iSd{\(\tfrac d{d-1}\,|Q_f|^2+(d-\lambda)\,|\nabla f|^2+\gamma(p)\,\frac{|\nabla f|^4}{f^2}\)}
\end{multline*}
with $Q_f$ and $\gamma(p)$ respectively given by~\eqref{Qf} and~\eqref{gamma(p)}. Using the fact that $h^p\to M$ as $t\to+\infty$, we find that $\nrmSd{\nabla h}2^2-\frac\lambda{p-2}\,\big(\nrmSd hp^2-\nrmSd h2^2\big)$ written with $\lambda=d$ is monotone non-increasing, with limit $0$ as $t\to+\infty$, so that~\eqref{Ineq:GNS} holds for the initial datum $h(t=0,\cdot)=f$. This method can be found in~\cite{Bakry1985} but we refer also to~\cite{1504} for details on the computations using the ultraspherical operator and to~\cite{DEKL,1504} for some improvements. It is however limited to 
$p\le2^\#$.

\medskip It is possible to overcome the limitation $p\le2^\#$ by considering the nonlinear diffusion
\be{FDE-sphere}
\frac{\partial w}{\partial t}=\Delta w^m\,,\quad w(t=0,\cdot)=f^p\,,
\ee
for some appropriately chosen $m$. The idea was introduced in~\cite{Demange_2008} and systematized in~\cite{Dolbeault_2014,DEKL,1504}. Let us give some details on how to use~\eqref{FDE-sphere}. We assume that
\be{Range:p-NL}
p\in[1,2)\cup(2,2^*]\quad\mbox{if}\quad d\ge3\quad\mbox{and}\quad p\in[1,2)\cup(2,+\infty)\quad\mbox{if}\quad d=1,\,2\,.
\ee
a range which covers the case \hbox{$2^\#< p<2^*$} and also $p=2^*$ if $d\ge3$. It is convenient to replace the solution~$w$ of~\eqref{FDE-sphere} by $u$ such that $w=u^{\beta\,p}$ with
\[
\beta=\frac2{2-p\,(1-m)}
\]
so that $u$ solves the nonlinear diffusion equation
\be{NLeqn}
\frac{\partial u}{\partial t}=u^{2-2\beta}\(\Delta u+\kappa\,\frac{|\nabla u|^2}u\)\,,\quad u(t=0,\cdot)=|f|^{1/\beta}\,,
\ee
with $\kappa=\beta\,(p-2)+1$. Then $h=w^{1/p}=u^\beta$ is such that
\begin{multline*}
\frac d{dt}\left[\nrmSd{\nabla h}2^2-\frac d{p-2}\(\nrmSd hp^2-\nrmSd h2^2\)\right]\\
=-\,2\,\beta^2\iSd{\(\tfrac d{d-1}\,|Q_u|^2+\delta(\beta)\,\frac{|\nabla u|^4}{u^2}\)}
\end{multline*}
with $Q_u:=\mathrm{Hess}u-\frac1d\,\Delta u\,\mathrm{Id}-\beta\,(p-1)\,\frac{d-1}{d+2}\(\frac{\nabla u\otimes\nabla u}u-\frac1d\,\frac{|\nabla u|^2}u\,\mathrm{Id}\)$ and
\be{gamma}
\delta(\beta):=-\(\frac{d-1}{d+2}\,(\kappa+\beta-1)\)^2+\,\kappa\,(\beta-1)+\,\frac d{d+2}\,(\kappa+\beta-1)\,.
\ee
As a consequence, we can write an improved version of~\eqref{Ineq:GNS}, with an integral remainder term
\begin{multline*}
\nrmSd{\nabla f}2^2-\frac d{p-2}\(\nrmSd fp^2-\nrmSd f2^2\)\\
\ge2\,\beta^2\int_0^{+\infty}\iSd{\(\tfrac d{d-1}\,|Q_u|^2+\delta(\beta)\,\frac{|\nabla u|^4}{u^2}\)}\,dt\,.
\end{multline*}
We recall that the optimal constant cannot be larger than $d$, as shown by a Taylor expansion around $f=1$ of~\eqref{Ineq:GNS} and by the Poincar\'e inequality on $\mathbb S^d$ with optimal constant, which corresponds to the case $p=1$ in~\eqref{Ineq:GNS}. This explains why, along the flow~\eqref{FDE-sphere}, optimality in~\eqref{Ineq:GNS} is achieved only in the limit as $t\to+\infty$. The infimum of the quotient
\[
\mathcal Q_p[h]:=\frac{(2-p)\iSd{|\nabla h|^2}}{\iSd{|h|^2}-\(\iSd{|h|^p}\)^{2/p}}
\]
is obtained by considering $\lim_{\varepsilon\to0}\mathcal Q_p[1+\varepsilon\,h]=\mathcal Q_1[h]$.

Also notice that $\iSd{\(\tfrac d{d-1}\,|Q_u|^2+\delta(\beta)\,\frac{|\nabla u|^4}{u^2}\)}$ written for $u=1+\varepsilon\,h$ with $h$ of zero average gives, at order $\varepsilon^2$, the quadratic form
\[
h\mapsto\frac d{d-1}\iSd{\left|\mathrm{Hess}h-\frac1d\,\Delta h\,\mathrm{Id}\right|^2}=\iSd{(\Delta h)^2}
\]
and that, as a consequence of an expansion of the square in $\iSd{(\Delta h+d\,h)^2}$ and of the Poincar\'e inequality on $\mathbb S^d$,
\[
\iSd{(\Delta h)^2}\ge d\iSd{|\nabla h|^2}\,.
\]
The computation is therefore optimal for the solutions of~\eqref{NLeqn} in the asymptotic regime as $t\to+\infty$. In the nonlinear regime, the constant which appears in the \emph{carr\'e du champ} computation is also $d$, which guarantees that the method gives the optimal constant in~\eqref{Ineq:GNS}. In the words of Section~\ref{Sec:LinearFlows} and with a straightforward analogy with Proposition~\ref{Prop:New1}, we have $\kappa_2=\kappa_1=\kappa_0$. This feature will again appear in Sections~\ref{Sec:FDE} and~\ref{Sec:CKN}. With the \emph{carr\'e du champ} method and the flow~\eqref{NLeqn}, not only the results of~\cite{MR1134481} are recovered in a natural parabolic setting, but improvements like~\eqref{improvedineq} and stability results, are also obtained: see~\cite{1504,Dolbeault_2020} for more considerations in this direction.

\section{Entropy methods and the fast diffusion equation on the Euclidean space}\label{Sec:FDE}

Most of this section is a summary of the results of~\cite{BDNS2021}. The optimal constant in the entropy-entropy production inequality is given by the spectral gap of the linearized functional inequality. An important difference with Section~\ref{Sec:SphereFlows} is that the linearization has to be done around the Barenblatt profile and not around constant functions. However, from the heuristic point of view, the situation is similar and the strategy is, \emph{mutatis mutandis}, to prove that $\kappa_2=\kappa_0$. This explains why optimal constants are obtained by generalized \emph{carr\'e du champ} methods and why stability results can be achieved for~\eqref{GNS}, which is the major result of~\cite{BDNS2021}.

\subsection{R\'enyi entropy powers}\label{FDE:Renyi}

On $\R^d$, $d\ge1$, let us consider the \emph{fast diffusion} equation
\be{FD}
\frac{\partial u}{\partial t}=\Delta u^m
\ee
with $m\in[m_1,1)$, $m_1:=(d-1)/d$, and an initial datum $u(t=0,x)=u_0(x)\ge0$, with $u_0\in\mathrm L^1\big(\mathbb R^d,(1+|x|^2)\,dx\big)$. Equation~\eqref{FD} was interpreted as the gradient flow of the \emph{entropy}
\[
\mathsf E:=\ird{u^m}
\]
with respect to Wasserstein's distance in~\cite{Otto2001}. Equation~\eqref{FD} admits self-similar solutions, the so-called \emph{Barenblatt functions} which, up to a scaling, a multiplication by a constant and a translation, take the form
\[
B(t,x)=\frac1{\kappa\,R(t)^d}\,\mathcal B\!\(\tau(t),\frac x{\kappa\,R(t)}\)\quad\forall\,(t,x)\in\R^+\times\R^d
\]
for some numerical constant $\kappa>0$, an unbounded increasing function $t\mapsto R(t)$, $\tau(t)=\frac12\,\log R(t)$ and a Barenblatt profile $\mathcal B$ that can be written as
\[
\mathcal B(x)=\(1+|x|^2\)^\frac1{m-1}\quad\forall\,x\in\R^d\,.
\]

As in Section~\ref{Sec:SphereFlows}, we prove~\eqref{GNS} directly by the \emph{carr\'e du champ method}. The key property, inspired by the \emph{R\'enyi entropy powers} of~\cite{MR3200617} is based~on:\\
(i) The entropy growth estimate:
\[
\mathsf E'=(1-m)\,\mathsf I\quad\mbox{where}\quad\mathsf I:=\ird{u\,\left|\nabla \mathsf P\right|^2}\quad\mbox{and}\quad\mathsf P:=\frac m{m-1}\,u^{m-1}\,.
\]
Here $\mathsf I$ and $\mathsf P$ are known respectively as the generalized \emph{Fisher information} and the \emph{pressure variable}.\\
(ii) The $t$-derivative of the generalized \emph{R\'enyi entropy powers}, that is,
\[
\mathsf G:=\mathsf I\,\mathsf E^{\,2\,\frac{m-m_1}{1-m}}\,,
\]
satisfies the identity
\be{F}
-\frac12\,\frac d{dt}\log\mathsf G=\ird{u^m\,\left|\,\mathrm D^2\mathsf P-\frac 1d\,\Delta\mathsf P\,\mathrm{Id}\,\right|^2}
+(m-m_1)\ird{u^m\,\left|\,\Delta\mathsf P+\frac{\mathsf I}{\mathsf E}\,\right|^2}\,.
\ee
Hence $\mathsf G$ is monotone with a limit given by a self-similar \emph{Barenblatt function} $B(t,x)$. It turns out that writing $\mathsf G[u]\ge\mathsf G[B]$ is exactly~\eqref{GNS} written with the optimal constant, using the relations $f=u^{m-1/2}$, so that, with
\[
p=\frac1{2\,m-1}\,,
\]
we have $\int_{\mathbb R^d}u\,dx=\int_{\mathbb R^d}f^{2p}\,dx$, $\mathsf E=\int_{\mathbb R^d}f^{p+1}\,dx$ and $\mathsf I=(p+1)^2\int_{\mathbb R^d}|\nabla f|^2\,dx$. Inequality~\eqref{GNS} is then easily recovered.

This gives the growth rate of $\mathsf E$. Let
\[
C_0:=4\,\tfrac{(1-m)^3}{(2\,m-1)^2}\,\(\mathcal C_{\mathrm{GNS}}(p)\)^{\frac2d\frac{(d+2)\,m-d}{(1-m)\,(2\,m-1)}}\,M^\frac{(d+2)\,m-d}{d\,(1-m)}
\]
 where $M=\nrm{u_0}1$, and $m_c:=(d-2)/d$. According to~\cite[Lemma~2.1]{BDNS2021}, we have the following result.
\begin{lem}[\cite{BDNS2021}]\label{Lem:Gronwall} Assume that $d\ge1$, $m\in[m_1,1)$ with additionally $m>1/2$ if $d=1$ or $d=2$, and consider a solution of~\eqref{FD} with initial datum $u_0\in\mathrm L^1_+\!\(\R^d,(1+|x|^2)\,dx\)$ such that $u_0^m\in\mathrm L^1(\R^d)$. Then
\be{Ch2:GrowthEntropy}
\ird{u^m(t,x)}\ge\(\ird{u_0^m}+\tfrac{(1-m)\,C_0}{m-m_c}\,t\)^\frac{1-m}{m-m_c}\quad\forall\,t\ge0\,.
\ee
\end{lem}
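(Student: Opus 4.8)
The plan is to combine the monotonicity of the R\'enyi entropy power $\mathsf G$ established in~\eqref{F} with the entropy growth identity $\mathsf E'=(1-m)\,\mathsf I$ and the sharp Gagliardo--Nirenberg inequality~\eqref{GNS}, converting the whole thing into a scalar differential inequality for $\mathsf E(t)=\ird{u^m(t,\cdot)}$ that can be integrated by a Gr\"onwall-type argument. First I would observe that by~\eqref{F} the quantity $\mathsf G=\mathsf I\,\mathsf E^{2\,(m-m_1)/(1-m)}$ is non-increasing in $t$, hence $\mathsf G(t)\ge\mathsf G_\infty:=\lim_{s\to+\infty}\mathsf G(s)$ for all $t\ge0$; but the identification $\mathsf G[u]\ge\mathsf G[B]$ with the Barenblatt profile is precisely the sharp inequality~\eqref{GNS}, and tracking the constants through the substitution $f=u^{m-1/2}$, $p=1/(2m-1)$, together with the conservation of mass $M=\nrm{u_0}1$, pins down the value $\mathsf G_\infty$ (equivalently a uniform lower bound $\mathsf I\ge\mathsf G_\infty\,\mathsf E^{-2\,(m-m_1)/(1-m)}$) in terms of $\mathcal C_{\mathrm{GNS}}(p)$ and $M$. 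This is what produces the constant $C_0$ in the statement.

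Next I would feed this bound back into the entropy growth estimate: from $\mathsf E'=(1-m)\,\mathsf I$ and $\mathsf I\ge \mathsf G_\infty\,\mathsf E^{-2\,(m-m_1)/(1-m)}$ we get
\[
\frac{d}{dt}\,\mathsf E\ge(1-m)\,\mathsf G_\infty\,\mathsf E^{-2\,\frac{m-m_1}{1-m}}\,.
\]
Checking the arithmetic of the exponents, $1+2\,(m-m_1)/(1-m)=(m-m_c)/(1-m)$ with $m_c=(d-2)/d$, so after multiplying by $\mathsf E^{2\,(m-m_1)/(1-m)}$ the right-hand side becomes the $t$-derivative of $\tfrac{1-m}{m-m_c}\,\mathsf E^{(m-m_c)/(1-m)}$ up to the constant; one then identifies $(1-m)\,\mathsf G_\infty$ with $C_0$ (this is the point of the precise normalization chosen for $C_0$). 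Integrating from $0$ to $t$ and raising to the power $(1-m)/(m-m_c)$ yields exactly~\eqref{Ch2:GrowthEntropy}. The sign conventions matter here: since $m\in[m_1,1)$ one has $m>m_c$, so $(m-m_c)/(1-m)>0$ and the map $z\mapsto z^{(m-m_c)/(1-m)}$ is increasing, which is what makes the integration step monotone in the right direction.

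The main obstacle is not the Gr\"onwall integration, which is routine once the differential inequality is in hand, but rather the regularity and decay bookkeeping needed to justify the manipulations: one must know that the solution of~\eqref{FD} with the stated initial data is regular enough and decays fast enough at spatial infinity for all the integrations by parts leading to~\eqref{F} to be legitimate, for $\mathsf E$, $\mathsf I$ to be finite and differentiable in $t$, for mass to be conserved, and — crucially — for the limit $\mathsf G_\infty$ to be attained and to coincide with $\mathsf G[B]$ rather than merely bounding it. The restriction $m>1/2$ when $d=1$ or $2$ enters exactly to keep $p=1/(2m-1)$ in the admissible range so that~\eqref{GNS} applies with a finite optimal constant, and the lower endpoint $m=m_1$ ensures $m-m_1\ge0$ so that the second term in~\eqref{F} has the right sign; I would invoke the relevant approximation and a priori estimates (as developed in~\cite{BDNS2021}) to handle these points rather than reprove them, and then the scalar ODE comparison delivers~\eqref{Ch2:GrowthEntropy}.
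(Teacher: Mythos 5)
Your proposal follows exactly the R\'enyi entropy power strategy that the paper itself sketches in Section~\ref{FDE:Renyi}: monotonicity of $\mathsf G$ from~\eqref{F}, identification of its limit with the Barenblatt value (which is~\eqref{GNS} with the sharp constant), and integration of the resulting scalar differential inequality for $\mathsf E$, with the correct exponent arithmetic $1+2\,(m-m_1)/(1-m)=(m-m_c)/(1-m)$. You also rightly identify the genuine difficulty as the justification of the integrations by parts behind~\eqref{F} for this class of solutions, which is precisely what the paper defers to the self-similar change of variables of Section~\ref{Sec:Self-Similar} and to~\cite{BDNS2021} (only the exact prefactor relating $C_0$ to the limiting value of $\mathsf G$ would need to be rechecked, but that is bookkeeping).
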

Knowing the optimal constant $C_0$ in~\eqref{Ch2:GrowthEntropy} is equivalent to identifying the optimal constant in~\eqref{GNS}. This is a standard property of entropy methods that optimal decay rates in the evolution equation are equivalent to optimal constants in the corresponding functional inequalities. The threshold case $m=m_1$ in~\eqref{FD} corresponds to $p=p_\star$ in~\eqref{GNS}, \emph{i.e.}, to the Sobolev inequality.

\subsection{Relative entropy and relative Fisher information}\label{Sec:Self-Similar}

Proving~\eqref{F} requires some integrations by parts which are delicate to justify in the class of solutions considered in Lemma~\ref{Lem:Gronwall}. A rigorous proof goes through the time-dependent rescaling
\be{TDRs}
u(t,x)=\frac1{\kappa^d\,R^d}\,v\!\(\tau,\frac x{\kappa\,R}\)\quad\mbox{where}\quad\frac{dR}{dt}=R^{-d\,(m-m_c)}\,,\quad\tau(t):=\tfrac12\,\log R(t)\,,
\ee
so that~\eqref{FD} is changed, with the choice $R(0)=1$, into the \emph{Fokker-Planck type equation}
\be{RFD}
\frac{\partial v}{\partial\tau}+\nabla\cdot\Big[v\(\nabla v^{m-1}-\,2\,x\)\Big]=0\,,
\ee
with an initial datum $v_0:=\kappa^d\,u_0(\kappa\cdot)\ge0$, for some numerical parameter~$\kappa$ which depends only on $m$. For simplicity, we shall assume from now on that $\ird{v_0}=\ird{\mathcal B}$.

For a given function $v\in\mathrm L^1_+\!\(\R^d,(1+|x|^2)\,dx\)$ such that $v^m\in\mathrm L^1(\R^d)$ and $\ird v=\ird{\mathcal B}$, let us define the \emph{relative entropy} (or \emph{free energy}) and the \emph{relative Fisher information} respectively by 
\[
\mathcal F[v]\!:=\!\int_{\mathbb R^d}\(\mathcal B^{m-1}\,(v-\mathcal B)-\tfrac1m\,(v^m-\mathcal B^m)\)dx\,,\quad\mathcal I[v]\!:=\!\int_{\mathbb R^d}v\left|\nabla v^{m-1}+\,2\,x\right|^2\,dx\,.
\]
It is proved in~\cite{DelPino2002} that the \emph{entropy-entropy production inequality}
\be{EEPineq}
\mathcal I[v]\ge4\,\mathcal F[v]
\ee
is exactly equivalent to~\eqref{GNS} written with the optimal constant. This is not difficult to check using $f=v^{m-1/2}$ and $\mathcal B=\mathsf g^{2p}$ with $p=1/(2\,m-1)$.

If additionally $v$ solves~\eqref{RFD}, it is almost straightforward to check that
\be{EP1}
\frac d{dt}\mathcal F[v(t,\cdot)]=-\,\mathcal I[v(t,\cdot)]\le-\,4\,\mathcal F[v(t,\cdot)]\,,
\ee
so that we have the estimate
\[
\mathcal F[v(t,\cdot)]\le\mathcal F[v_0]\,e^{-4t}\quad\forall\,t\ge0\,.
\]
This can be used to obtain intermediate asymptotics for a solution of~\eqref{FD} using~\eqref{TDRs} and a generalized Pinsker-Csisz\'ar-Kullback inequality. Even more interesting is the fact that the \emph{carr\'e du champ} method applies:
\be{EP2}
\frac d{dt}\mathcal I[v(t,\cdot)]\le-\,4\,\mathcal I[v(t,\cdot)]\,,
\ee
with equality up to a term which is similar to the one in~\eqref{F} after applying~\eqref{TDRs}. The justification of the integrations by parts is easier than in the R\'enyi entropy powers approach, but still requires some care: see~\cite{MR3497125,DEL-JEPE} and references therein for a complete proof. This is also the right setting to prove the identities in the R\'enyi entropy framework of Section~\ref{FDE:Renyi}, up to the change of variables~\eqref{TDRs}: all details can be found in~\cite{DEL-JEPE}.

Let us consider the quotient
\[
\mathcal Q[v]:=\frac{\mathcal I[v]}{\mathcal F[v]}\,.
\]
Based on~\eqref{EP1} and~\eqref{EP2}, we obtain that
\[
\frac d{dt}\mathcal Q[v(t,\cdot)]=\mathcal Q[v(t,\cdot)]\,\big(\mathcal Q[v(t,\cdot)]-4\big)
\]
if $v$ solves~\eqref{RFD}. This proves the following \emph{initial time layer} property.
\begin{lem}[\cite{BDNS2021}]\label{Lem:ITlayer} With the above assumptions, if $\mathcal Q[v(T,\cdot)]\ge4+\eta$ for some $\eta>0$ and $T>0$, then $\mathcal Q[v(t,\cdot)]\ge\,4+4\,\eta\,e^{-4\,T}/(4+\eta-\eta\,e^{-4\,T})$ for any $t\in[0,T]$.\end{lem}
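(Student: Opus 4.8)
The plan is to turn the autonomous differential inequality for $\mathcal Q$ into an explicit lower bound by comparison with the solution of the associated Riccati equation. First I would recall from the discussion preceding the statement that, when $v$ solves~\eqref{RFD}, one has the exact identity
\[
\frac d{dt}\mathcal Q[v(t,\cdot)]=\mathcal Q[v(t,\cdot)]\,\big(\mathcal Q[v(t,\cdot)]-4\big)\,,
\]
which follows from \eqref{EP1} and \eqref{EP2} by the quotient rule: $\mathcal Q'=\mathcal I'/\mathcal F-\mathcal I\,\mathcal F'/\mathcal F^2=\mathcal I'/\mathcal F+\mathcal Q^2$, and combining the two estimates (used here as the sharp identities behind them) gives the stated closed form. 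Set $y(t):=\mathcal Q[v(t,\cdot)]-4$, so that $y'=(y+4)\,y=y^2+4\,y$, a Bernoulli/Riccati equation that integrates explicitly.

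Next I would integrate this equation backwards from $t=T$ to $t\in[0,T]$. Since $z:=1/y$ satisfies the linear equation $z'=-(4\,z+1)$, we get $z(t)=e^{4(T-t)}\,\big(z(T)+\tfrac14\big)-\tfrac14$, hence
\[
y(t)=\frac{y(T)}{e^{4(T-t)}\,\big(1+\tfrac14\,y(T)\big)-\tfrac14\,y(T)}\,.
\]
Writing $s=T-t\in[0,T]$ and substituting $y(T)=\mathcal Q[v(T,\cdot)]-4\ge\eta>0$, the right-hand side is increasing in $y(T)$ for fixed $s$ (one checks the derivative in $y(T)$ is positive as long as the denominator stays positive, which it does because the $e^{4s}$ term dominates), so $y(t)\ge \eta/\big(e^{4s}(1+\eta/4)-\eta/4\big)$. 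At this point I would bound $s\le T$, i.e. replace $e^{4s}$ by $e^{4T}$ in the denominator: since the denominator is positive and we are dividing, enlarging it only decreases the fraction, giving
\[
y(t)\ge\frac{\eta}{e^{4T}\,(1+\eta/4)-\eta/4}=\frac{4\,\eta\,e^{-4T}}{(4+\eta)-\eta\,e^{-4T}}\,,
\]
after multiplying numerator and denominator by $4\,e^{-4T}$. Adding $4$ back recovers exactly the claimed bound $\mathcal Q[v(t,\cdot)]\ge 4+4\,\eta\,e^{-4T}/(4+\eta-\eta\,e^{-4T})$.

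The one genuine subtlety — and the step I expect to require the most care — is the regularity/justification issue rather than the algebra: one must know that $t\mapsto\mathcal Q[v(t,\cdot)]$ is absolutely continuous (or at least differentiable) on $[0,T]$ so that the differential identity can be integrated, and that $\mathcal F[v(t,\cdot)]>0$ on this interval so that $\mathcal Q$ is well-defined and the manipulation $z=1/y$ is legitimate. Positivity of $\mathcal F$ away from the Barenblatt profile follows from \eqref{EEPineq} together with the fact that $\mathcal F$ is nonincreasing along the flow (from \eqref{EP1}), so once $\mathcal Q[v(T,\cdot)]>4$ the solution is not the stationary one and $\mathcal F$ stays strictly positive backward in time; the differentiability is exactly the content of the integration-by-parts justifications referenced in the text (\cite{MR3497125,DEL-JEPE}), which I would simply invoke. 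A complementary way to phrase the monotonicity argument, avoiding any division, is to note $y'=y^2+4y\ge 4y$ wherever $y\ge 0$, but that only yields the weaker $y(t)\ge y(T)\,e^{-4(T-t)}$; to capture the sharp denominator $4+\eta-\eta e^{-4T}$ one really does need the full Riccati integration as above, so the comparison with $z=1/y$ is the efficient route.
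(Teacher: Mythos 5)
Your proof matches the paper's: both integrate the Riccati relation $\frac d{dt}\mathcal Q=\mathcal Q\,(\mathcal Q-4)$ backward from $t=T$ via the substitution $z=1/(\mathcal Q-4)$, use monotonicity in the terminal datum $\mathcal Q[v(T,\cdot)]\ge4+\eta$, and bound $e^{4(T-t)}\le e^{4T}$ to land on exactly the stated constant. The only point worth flagging is that, since \eqref{EP2} is an inequality, the relation is really $\frac d{dt}\mathcal Q\le\mathcal Q\,(\mathcal Q-4)$, so strictly speaking one invokes a backward comparison principle rather than an exact integration; this is the direction needed for the lower bound and is how the argument is run in~\cite{BDNS2021}, so it does not affect your conclusion.
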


\subsection{Linearization and Hardy-Poincar\'e inequalities}

For reasons which are similar to the ones in Section~\ref{Sec:SphereFlows}, it turns out that $\inf\mathcal Q[v]=4$ is not achieved among admissible functions, in the sense that $\lim_{n\to+\infty}\mathcal F[v_n]=0$ if $(v_n)_{n\in\N}$ is a minimizing sequence for $\mathcal Q$. In other words and under appropriate normalization conditions, this means that $v_n\to\mathcal B$ and suggests to consider the limiting problem
\[
\inf\frac{\mathsf I[h]}{\mathsf F[h]}=\lim_{\varepsilon\to0}\mathcal Q\big[\mathcal B\,\big(1+\varepsilon\,\mathcal B^{1-m}\,h\big)\big]
\]
where the \emph{linearized free energy} and the \emph{linearized Fisher information} are defined respectively by
\[\label{linearized.fisher}
\mathsf F[h]:=\frac m2\ird{|h|^2\,\mB^{2-m}}\quad\mbox{and}\quad\mathsf I[h]:=m\,(1-m)\ird{|\nabla h|^2\,\mB}\,.
\]
By the \emph{Hardy-Poincar\'e inequality} if $d\ge1$ and $m\in(m_1,1)$, and for any function $h\in\mathrm L^2(\R^d,\mB^{2-m}\,dx)$ such that $\nabla h\in\mathrm L^2(\R^d,\mB\,dx)$ and $\ird{h\,\mB^{2-m}}=0$, we have the \emph{Hardy-Poincar\'e inequality}
\[
\mathsf I[h]\ge4\,\mathsf F[h]\,.
\]
If additionally we assume that $\ird{x\,h\,\mB^{2-m}}=0$, then we have the \emph{improved Hardy-Poincar\'e inequality}
\be{HP-PNAS}
\mathsf I[h]\ge4\,\alpha\,\mathsf F[h]
\ee
where $\alpha=2-d\,(1-m)$ if $m\in(m_1,1)$. To get an improved inequality if $m=m_1$, one has to assume additionally that $\ird{|x|^2\,h\,\mB^{2-m}}=0$. As in~\cite{Blanchet2009,Bonforte2010c,BDNS2021}, the improved estimate of~\eqref{HP-PNAS} can be reimported in the nonlinear functionals as follows.
\begin{lem}[\cite{BDNS2021}]\label{Prop:Gap} Let $m\in(m_1,1)$ if $d\ge2$, $m\in(1/3,1)$ if $d=1$, $\eta=2\,d\,(m-m_1)$. There exists an explicit $\chi\in(0,1)$ such that,if $\ird v=\Mstar$, $\ird{x\,v}=0$ and
\be{BarenblattStable}
(1-\varepsilon)\,\mB\le v\le(1+\varepsilon)\,\mB
\ee
for some $\varepsilon\in(0,\chi\,\eta)$, then
\[
\mathcal Q[v]\ge4+\eta\,.
\]
\end{lem}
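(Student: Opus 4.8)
The plan is to combine the \emph{initial time layer} estimate of Lemma~\ref{Lem:ITlayer} with the \emph{improved Hardy--Poincar\'e inequality}~\eqref{HP-PNAS}, using the pointwise two-sided bound~\eqref{BarenblattStable} to transfer the spectral gap from the linearized functionals to the nonlinear ones. First I would record the linearization: writing $v=\mB\,(1+\varepsilon\,\mB^{1-m}\,h)$ with the normalizations $\ird{h\,\mB^{2-m}}=0$ (equivalent to $\ird v=\Mstar$) and $\ird{x\,h\,\mB^{2-m}}=0$ (equivalent to $\ird{x\,v}=0$), one has $\mathcal F[v]=\varepsilon^2\,\mathsf F[h]\,(1+o(1))$ and $\mathcal I[v]=\varepsilon^2\,\mathsf I[h]\,(1+o(1))$ as $\varepsilon\to0$, so~\eqref{HP-PNAS} gives $\mathcal Q[v]\ge4\,\alpha-o(1)$ with $\alpha=2-d\,(1-m)$, hence $4\,\alpha=4+2\,\eta$ with $\eta=2\,d\,(m-m_1)$. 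Thus for $v$ sufficiently close to $\mB$ one expects $\mathcal Q[v]\ge4+\eta$, which is exactly the claim --- but the quantitative content is to make ``sufficiently close'' explicit as $\varepsilon<\chi\,\eta$, uniformly, rather than in an $\varepsilon\to0$ limit.

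The key steps, in order, are as follows. (1) Expand $\mathcal F[v]$ and $\mathcal I[v]$ to second order in the perturbation $g:=\mB^{1-m}\,h$, i.e. write $v=\mB\,(1+\varepsilon\,g)$, and bound the remainders: using~\eqref{BarenblattStable} one has $|\varepsilon\,g|\le\varepsilon$ pointwise, so the Taylor remainders in the convex function $s\mapsto s^m$ appearing in $\mathcal F$ and in the quadratic expansion of $\mathcal I$ are controlled by $C\,\varepsilon$ times the corresponding quadratic terms, with $C=C(m,d)$ explicit. This yields $\mathcal F[v]\le(1+C\,\varepsilon)\,\varepsilon^2\,\mathsf F[h]$ and $\mathcal I[v]\ge(1-C\,\varepsilon)\,\varepsilon^2\,\mathsf I[h]$ (or the analogous one-sided bounds in the needed direction). (2) Apply~\eqref{HP-PNAS} to $h$ --- legitimate since the two orthogonality conditions hold --- to get $\mathsf I[h]\ge(4+2\,\eta)\,\mathsf F[h]$. (3) Combine: $\mathcal Q[v]=\mathcal I[v]/\mathcal F[v]\ge\frac{1-C\,\varepsilon}{1+C\,\varepsilon}\,(4+2\,\eta)\ge4+\eta$ provided $\varepsilon$ is smaller than a suitable multiple of $\eta$; choosing $\chi$ so that $\varepsilon<\chi\,\eta$ forces $\frac{1-C\,\varepsilon}{1+C\,\varepsilon}\,(4+2\,\eta)\ge4+\eta$ closes the argument and makes $\chi\in(0,1)$ explicit in terms of $C$ (hence of $m$ and $d$).

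The main obstacle I anticipate is step~(1): controlling the nonlinear remainders uniformly on $\R^d$ despite the degeneracy/unboundedness of the weights $\mB$, $\mB^{2-m}$ at infinity. One must check that the error terms are genuinely subordinate to $\mathsf F[h]$ and $\mathsf I[h]$ \emph{as weighted integrals}, not merely pointwise, which is where~\eqref{BarenblattStable} is essential --- it converts the perturbation into a uniform $\mathrm L^\infty$ bound so that, e.g., $\big|(1+\varepsilon g)^m-1-m\,\varepsilon g-\tfrac{m(m-1)}2\,\varepsilon^2 g^2\big|\le C\,\varepsilon^3\,|g|^2$ with the cubic absorbed into the quadratic weighted norm. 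A secondary subtlety is the gradient term in $\mathcal I[v]$: one has $\nabla v^{m-1}+2x = 2x\,\big((1+\varepsilon g)^{m-1}-1\big)+\varepsilon\,\mB^{m-1}\nabla g\,(1+\varepsilon g)^{m-2}$ after using $\nabla\mB^{m-1}=(m-1)\,2x\,\mB^{m-1}/\mB\cdot\mB^{2-m}$\,---\,actually $\nabla\mB^{m-1}+2x=0$\,---\,so the leading term is $\varepsilon\,\nabla h$ up to factors $(1+\varepsilon g)^{m-2}$ that are pinched between explicit constants by~\eqref{BarenblattStable}; bookkeeping the cross terms and the $x\cdot\nabla h$ contributions (which integrate by parts against the weight) so that nothing is lost is the technical heart. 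Once these estimates are in place with explicit constants, the conclusion and the value of $\chi$ follow by elementary algebra; I would then remark, as in~\cite{Blanchet2009,Bonforte2010c,BDNS2021}, that this is the step that ``reimports'' the linear spectral gap into the nonlinear flow and, together with Lemma~\ref{Lem:ITlayer}, drives the stability result.
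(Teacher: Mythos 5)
Your proposal is correct and follows exactly the strategy that the paper (which only cites \cite{Blanchet2009,Bonforte2010c,BDNS2021} for this lemma) attributes to those references: a two-sided comparison of $\mathcal F$ and $\mathcal I$ with their quadratic counterparts $\mathsf F$ and $\mathsf I$, made uniform by the $\mathrm L^\infty$ sandwich~\eqref{BarenblattStable}, followed by the improved Hardy--Poincar\'e inequality~\eqref{HP-PNAS} (note $4\,\alpha=4+2\,\eta$, so one may afford to lose half the improvement) and an elementary choice of $\chi$ absorbing the $O(\varepsilon)$ degradation. The only cosmetic points are that the second-order expansions of $\mathcal F$ and $\mathcal I$ match $\varepsilon^2\,\mathsf F[h]$ and $\varepsilon^2\,\mathsf I[h]$ only up to a common factor $(1-m)/m$ (harmless, since only the quotient enters), and that the sign in $\nabla\mathcal B^{m-1}+2\,x$ is an inconsistency of the paper's conventions, not of your argument.
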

In other words, since Condition~\eqref{BarenblattStable} is stable under the action of~\eqref{RFD} according to~\cite{Blanchet2009}, we have the \emph{asymptotic time layer} property $\mathcal Q[v(t,\cdot)]\ge4+\eta$ if the solution $v(t,\cdot)$ of~\eqref{RFD} satisfies~\eqref{BarenblattStable} for any $t>T_\star$, for some \emph{threshold time} $T_\star>0$. 

\subsection{The threshold time estimate}

The existence of a \emph{threshold time} is known for a given initial datum from~\cite{Bonforte2006}, but has been quantified only in~\cite{BDNS2021} as follows.
\begin{thm}[\cite{BDNS2021}]\label{Prop:TT} Let $m\in\big[m_1,1\big)$ if $d\ge2$, $m>1/3$ if $d=1$, $A>0$, and $G>0$ be given. Let $\varepsilon\in(0,\min\{\chi\,\eta, \varepsilon_0\})$, with $\eta$ and $\chi$ as in Proposition~\ref{Prop:Gap}, for some explicit $\varepsilon_0\in (0,1/2)$, and define
\be{Tstar}
T_\star:= \frac1{2\,\alpha}\,\log\(1+\alpha\,\taustar\,\frac{1+A^{1-m}+G^\frac\alpha2}{\varepsilon^\mathsf a}\)
\ee
with $\alpha=d\,(m-m_c)$, for some explicit, positive numerical constants $\taustar>0$ and~$\mathsf a$. Then for any solution $v$ to~\eqref{RFD} with nonnegative initial datum $v_0\in\mathrm L^1(\R^d)$, $\ird{v_0}=\Mstar$, \hbox{$\ird{x\,v_0}=0$} which satisfies
\be{hyp:Harnack.self}
\sup_{r>0}r^\frac{d\,(m-m_c)}{(1-m)}\int_{|x|>r}u_0\,dx\le A\,,\quad \mathcal F[v_0]\le G\,,
\ee
we have that
\be{uniformFDr}
(1-\varepsilon)\,\mB(x)\le v(t,x)\le(1+\varepsilon)\,\mB(x)\quad\forall\,(t,x)\in[T_\star,+\infty)\times\R^d\,.
\ee
\end{thm}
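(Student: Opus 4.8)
\textbf{Proof strategy for Theorem~\ref{Prop:TT}.} The plan is to exploit the dissipative structure of~\eqref{RFD} to show that the relative entropy, which decays exponentially by~\eqref{EP1}, eventually becomes small enough to force the uniform two-sided bound~\eqref{uniformFDr}, and to make the time at which this happens explicit in terms of $A$ and $G$. The mechanism has three layers: a quantitative Harnack-type / regularity input that converts smallness of $\mathcal F[v(t,\cdot)]$ into smallness of the uniform relative error $\bigl\|v(t,\cdot)/\mB-1\bigr\|_{\mathrm L^\infty}$; the exponential decay $\mathcal F[v(t,\cdot)]\le\mathcal F[v_0]\,e^{-4t}$; and a bookkeeping of constants so that the threshold time has the logarithmic form~\eqref{Tstar}.

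\textbf{Step 1: from tail control and bounded free energy to a global Harnack inequality.} First I would invoke the parabolic regularity theory for~\eqref{RFD} (equivalently~\eqref{FD} after the rescaling~\eqref{TDRs}), in the spirit of~\cite{Bonforte2006,Bonforte2010c}: the hypothesis $\sup_{r>0}r^{d(m-m_c)/(1-m)}\int_{|x|>r}u_0\,dx\le A$ controls the mass at infinity uniformly in time and prevents the solution from escaping to spatial infinity, while $\mathcal F[v_0]\le G$ controls the bulk. Together these yield a \emph{global Harnack principle}: there exist times $t_1$ and constants $0<c_-\le c_+$, all explicit in $A$, $G$, $m$, $d$, such that $c_-\,\mB(x)\le v(t,x)\le c_+\,\mB(x)$ for all $t\ge t_1$ and all $x\in\R^d$. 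This is the step I expect to be the main obstacle: making every constant in the Harnack chain explicit (local $\mathrm L^\infty$ bounds, Moser iteration, matching of the interior estimate with the Barenblatt tail behaviour) is exactly what separates the qualitative result of~\cite{Bonforte2006} from the quantitative~\cite{BDNS2021}, and it requires carefully tracking the dependence of each intermediate estimate on $A$ and $G$ rather than merely on $v_0$ itself.

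\textbf{Step 2: from the global Harnack bound to the sharp relative error via entropy decay.} Once $v(t,\cdot)$ is pinched between multiples of $\mB$ for $t\ge t_1$, one upgrades the crude constants $c_\pm$ to $1\pm\varepsilon$. Here I would use the fact that, under the Harnack bound, the relative entropy $\mathcal F[v(t,\cdot)]$ is \emph{equivalent} to a weighted $\mathrm L^2$-type distance of $v(t,\cdot)$ to $\mB$ — the Taylor expansion of the convex integrand in $\mathcal F$ around $\mB$ has a coercive quadratic leading part once $v/\mB$ is bounded above and below — and conversely, again by interpolation with the uniform bound, a smallness of this weighted $\mathrm L^2$ distance together with a further (explicit) regularization time controls $\bigl\|v(t,\cdot)/\mB-1\bigr\|_\infty$. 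Combining with $\mathcal F[v(t,\cdot)]\le G\,e^{-4(t-t_1)}$ (or directly $\mathcal F[v_0]\,e^{-4t}$ using $\mathcal F[v_0]\le G$), one finds an explicit $T_1\ge t_1$ of the form $T_1\sim \text{const}+\frac{1}{4}\log\bigl((1+A^{1-m}+G^{\alpha/2})/\varepsilon^{\mathsf a}\bigr)$ after which $\|v/\mB-1\|_\infty\le\varepsilon$. Reconciling the prefactor $\tfrac1{4}$ with the $\tfrac1{2\alpha}$ appearing in~\eqref{Tstar} is a matter of how one measures the decay rate of the error in the original variables; in the self-similar variables the natural rate involves $\alpha=d(m-m_c)$, since the slowest mode controlling the uniform norm after linearization corresponds to the spectral gap $\alpha$ of the Hardy–Poincaré operator rather than to the free-energy rate $4$.

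\textbf{Step 3: conclude and verify stability of the bound.} Finally, set $T_\star$ as in~\eqref{Tstar}; the above yields $(1-\varepsilon)\,\mB\le v(T_\star,\cdot)\le(1+\varepsilon)\,\mB$. To extend this to all $t\ge T_\star$ I would invoke the fact — already used right after Lemma~\ref{Prop:Gap}, and proved in~\cite{Blanchet2009} — that the pointwise condition~\eqref{BarenblattStable} is invariant under the flow~\eqref{RFD}: comparison-principle / barrier arguments show that once $v$ is trapped between $(1\pm\varepsilon)\mB$ at one time, it stays trapped (with the same $\varepsilon$, or a non-increasing one) thereafter. Hence~\eqref{uniformFDr} holds on $[T_\star,+\infty)\times\R^d$. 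The restriction $\varepsilon<\min\{\chi\eta,\varepsilon_0\}$ is exactly what is needed so that, on the one hand, the conclusion feeds into Lemma~\ref{Prop:Gap} to give $\mathcal Q[v(t,\cdot)]\ge4+\eta$ for $t\ge T_\star$ (the asymptotic time-layer), and on the other hand $\varepsilon_0<1/2$ keeps the Taylor-expansion and Harnack estimates of Steps~1--2 valid.
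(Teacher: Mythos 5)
First, note that the paper you are working from does not actually prove this theorem: it is imported from~\cite{BDNS2021}, and the text only indicates the ingredients (a quantitative global Harnack Principle built on Moser's estimates and a constructive version of~\cite{Bonforte2006}). Your three-step architecture --- (i) global Harnack bound from the tail condition and the entropy bound, (ii) upgrade from crude constants $c_\pm$ to $1\pm\varepsilon$ using entropy decay plus regularity, (iii) persistence for $t\ge T_\star$ --- is indeed the architecture of the proof in~\cite{BDNS2021}, and your Step~3 is consistent with the flow-invariance of~\eqref{BarenblattStable} quoted from~\cite{Blanchet2009}.

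There is, however, a concrete gap in your Step~2 as written: the mechanism you propose cannot control the far-field region. Under the crude Harnack bound $c_-\,\mB\le v\le c_+\,\mB$ with \emph{fixed} constants, the relative entropy is comparable to $\ird{\mB^m\,|v/\mB-1|^2}$, a weighted $\mathrm L^2$ quantity whose weight $\mB^m=(1+|x|^2)^{m/(m-1)}$ is integrable and decays at infinity. Smallness of this quantity, even combined with arbitrarily strong interior regularization, says nothing about $v/\mB-1$ uniformly for large $|x|$: the constants $c_\pm$ do not improve in time by this route, so you never reach $1\pm\varepsilon$ outside a ball. In~\cite{BDNS2021} the outer region is handled by a separate barrier argument: the solution is trapped between two genuine (time-shifted, mass-adjusted) Barenblatt solutions, and it is the convergence of these barriers to each other \emph{in relative error}, at an explicit rate governed by $\alpha=d\,(m-m_c)$, that produces both the prefactor $\tfrac1{2\alpha}$ and the $A^{1-m}$ term in~\eqref{Tstar}; the entropy-plus-H\"older interpolation you describe is only used on a bounded inner region and is what produces the $G^{\alpha/2}$ and $\varepsilon^{\mathsf a}$ factors. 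Your attempted reconciliation of the rate $4$ from~\eqref{EP1} with $\tfrac1{2\alpha}$ via the ``spectral gap of the Hardy--Poincar\'e operator'' is therefore not the right explanation, and the inner/outer splitting --- together with the explicit tracking of constants through the Moser iteration that you correctly identify as the hard part of Step~1 --- is the missing content that a complete proof must supply.
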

Based on a \emph{global Harnack Principle}, this result is at the core of the method of~\cite{BDNS2021}. It relies on a quantitative version of the results of J.~Moser in~\cite{Moser1964,Moser1971}, and a constructive proof of~\cite{Bonforte2006} based on the improved results of~\cite{Bonforte2019a}. As discussed in~\cite[Chapter~7]{BDNS2021}, the tail decay in~\eqref{hyp:Harnack.self} is not only sufficient but also necessary for obtaining~\eqref{uniformFDr}.

\subsection{A stability result for Gagliardo-Nirenberg-Sobolev inequalities}\label{Sec:GNS-stability}

By collecting the \emph{initial time layer} property of of Lemma~\ref{Lem:ITlayer}, the \emph{asympotic time layer} property of Lemma~\ref{Prop:Gap} and the \emph{ threshold time estimate} of Theorem~\ref{Prop:TT}, we are able to write an improved entropy-entropy production inequality which amounts to $\mathcal Q[v(t,\cdot)]\ge\,4+4\,\eta\,e^{-4\,T_\star}/(4+\eta-\eta\,e^{-4\,T_\star})$ for any $t\in\R^+$. This is the desired stability result, which goes as follows. There is a unique $\mathsf g_f\in\mathfrak M$ such that
\[
\ird{(1,x,|x|^2)\,f^{2p}}=\ird{(1,x,|x|^2)\,\mathsf g_f^{2p}}\,.
\]
By the entropy-entropy production inequality~\eqref{EEPineq} written for $v=f^{2p}$, we know that
\[
\delta[f]:=\frac{p+1}{p-1}\ird{\left|(p-1)\,\nabla f+f^p\,\nabla \mathsf g_f^{1-p}\right|^2}-\,4\,\mathcal E[f]
\]
is nonnegative, where the \emph{relative entropy} with respect to $\mathsf g_f$ is defined by
\[
\mathcal E[f]:=\frac{2\,p}{1-p}\ird{\(f^{p+1}-\mathsf g_f^{p+1}-\tfrac{1+p}{2\,p}\,\mathsf g_f^{1-p}\(f^{2p}-\mathsf g_f^{2p}\)\)}\,.
\]
An expansion of $\delta$ shows that, for some explicit numerical constant $\mathcal K_{\mathrm{GNS}}$,
\[
\delta[f]=(p-1)^2\,\nrm{\nabla f}2^2+4\,\frac{d-p\,(d-2)}{p-1}\,\nrm f{p+1}^{p+1}-\mathcal K_{\mathrm{GNS}}\,\frac{p+1}{p-1}\,\nrm f{2p}^{2p\gamma}
\]
and the inequality $\delta[f]\ge0$ is actually equivalent to~\eqref{GNS}.
\begin{thm}[Stability in Gagliardo-Nirenberg inequalities, \cite{BDNS2021}]\label{Thm:stabilityDraft2} Let $d\ge1$ and $p\in(1,p^\star)$. There is an explicit $\mathcal C$ such that, for any nonnegative $f\in\mathrm L^{2p}\big(\mathbb R^d,(1+|x|^2)\,dx\big)$ such that $\nabla f\in\mathrm L^2(\mathbb R^d)$ and
\[
A=\sup_{r>0}r^\frac{d-p\,(d-4)}{p-1}\int_{|x|>r}f^{2p}\,dx<\infty\quad\mbox{and}\quad\mathcal E[f]=G<\infty\,,
\]
then we have
\[
\delta[f]\ge\mathcal C\inf_{\varphi\in\mathfrak M}\ird{\left|(p-1)\,\nabla f+f^p\,\nabla \varphi^{1-p}\right|^2}\,.
\]
\end{thm}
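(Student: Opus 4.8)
The plan is to combine the three pillars assembled in the preceding subsections: the \emph{initial time layer} estimate (Lemma~\ref{Lem:ITlayer}), the \emph{asymptotic time layer} estimate (Lemma~\ref{Prop:Gap}), and the \emph{threshold time estimate} (Theorem~\ref{Prop:TT}). Given $f$ as in the statement, set $v_0=f^{2p}$; the normalization conditions $\ird{(1,x,|x|^2)\,f^{2p}}=\ird{(1,x,|x|^2)\,\mathsf g_f^{2p}}$ fix the mass $\Mstar$, center the first moment, and — by an appropriate scaling — allow us to take $\mB$ as the reference Barenblatt profile so that $\mathcal F[v_0]=\mathcal E[f]=G<\infty$ and the tail bound $A<\infty$ matches~\eqref{hyp:Harnack.self}. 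Run the rescaled flow~\eqref{RFD} from $v_0$. By Theorem~\ref{Prop:TT}, for $t\ge T_\star$ (with $T_\star$ given explicitly by~\eqref{Tstar} in terms of $A$ and $G$) the solution is trapped between $(1-\varepsilon)\,\mB$ and $(1+\varepsilon)\,\mB$, hence by Lemma~\ref{Prop:Gap}, $\mathcal Q[v(t,\cdot)]\ge 4+\eta$ for all $t\ge T_\star$. Feeding this into the initial time layer Lemma~\ref{Lem:ITlayer} (applied at $T=T_\star$) yields $\mathcal Q[v(t,\cdot)]\ge 4+\eta_\star$ for \emph{all} $t\ge 0$, where $\eta_\star:=4\,\eta\,e^{-4\,T_\star}/(4+\eta-\eta\,e^{-4\,T_\star})>0$ is explicit.

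Next I would convert this uniform lower bound on $\mathcal Q$ at $t=0$ into the claimed inequality. Since $\mathcal Q[v_0]\ge 4+\eta_\star$ means $\mathcal I[v_0]\ge (4+\eta_\star)\,\mathcal F[v_0]$, we get the improved entropy--entropy production inequality $\mathcal I[v_0]-4\,\mathcal F[v_0]\ge \eta_\star\,\mathcal F[v_0]\ge \tfrac{\eta_\star}{4+\eta_\star}\,\mathcal I[v_0]$. It remains to translate back to the variable $f$: one checks, exactly as in the identifications $\mathcal I[v]=$ (relative Fisher information) and $\mathcal F[v]=\mathcal E[f]$ recorded before the statement, that $\mathcal I[f^{2p}]=\tfrac{p+1}{p-1}\ird{\left|(p-1)\,\nabla f+f^p\,\nabla \mathsf g_f^{1-p}\right|^2}$ and that $\delta[f]=\mathcal I[f^{2p}]-4\,\mathcal F[f^{2p}]$. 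Hence $\delta[f]\ge \tfrac{\eta_\star}{4+\eta_\star}\,\mathcal I[f^{2p}]=\tfrac{\eta_\star}{4+\eta_\star}\,\tfrac{p+1}{p-1}\ird{\left|(p-1)\,\nabla f+f^p\,\nabla \mathsf g_f^{1-p}\right|^2}$. Finally, replacing $\mathsf g_f$ by the general infimum over $\varphi\in\mathfrak M$ only decreases the right-hand side, so $\delta[f]\ge\mathcal C\,\inf_{\varphi\in\mathfrak M}\ird{\left|(p-1)\,\nabla f+f^p\,\nabla \varphi^{1-p}\right|^2}$ with $\mathcal C=\tfrac{p+1}{p-1}\,\tfrac{\eta_\star}{4+\eta_\star}$.

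Two points require care. First, the constant $\mathcal C$ depends on $\eta_\star$, which depends on $T_\star$, which through~\eqref{Tstar} depends on $A$ and $G$; so strictly speaking $\mathcal C=\mathcal C(A,G,d,p)$, and one should either state it as a function of the data or, if a uniform constant is wanted on bounded ranges of $(A,G)$, note that $T_\star$ is increasing in $A$ and $G$ and hence $\mathcal C$ is bounded below on such ranges. I would make this dependence explicit. Second, one must justify that the flow~\eqref{RFD} is well-posed in the relevant class, that $\mathcal F$ and $\mathcal I$ are finite and differentiable along it, that the differential inequalities~\eqref{EP1}--\eqref{EP2} (and hence the ODE for $\mathcal Q$) hold rigorously, and that Condition~\eqref{BarenblattStable} propagates forward in time — these are exactly the regularity and integration-by-parts issues handled via the results of~\cite{Blanchet2009,Bonforte2006,Bonforte2019a,MR3497125,DEL-JEPE} cited above, and I would invoke them rather than reprove them. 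The main obstacle, conceptually, is not any single estimate but the quantitative bookkeeping: ensuring every constant ($\varepsilon$, $\chi$, $\eta$, $\taustar$, $\mathsf a$, $\varepsilon_0$, and ultimately $\mathcal C$) is genuinely explicit and that the chain of dependencies closes without circularity, in particular that the threshold $\varepsilon$ can be chosen small enough to satisfy simultaneously $\varepsilon<\chi\,\eta$ and $\varepsilon<\varepsilon_0$ while keeping $T_\star$ finite.
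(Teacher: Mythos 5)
Your proposal follows exactly the route the paper takes: combine the initial time layer estimate (Lemma~\ref{Lem:ITlayer}), the asymptotic time layer estimate (Lemma~\ref{Prop:Gap}) and the threshold time estimate (Theorem~\ref{Prop:TT}) to get the uniform improvement $\mathcal Q[v(t,\cdot)]\ge4+4\,\eta\,e^{-4\,T_\star}/(4+\eta-\eta\,e^{-4\,T_\star})$ for all $t\ge0$, then rewrite the improved entropy--entropy production inequality at $t=0$ in terms of $f$, $\delta[f]$ and the relative Fisher information, bounding the infimum over $\mathfrak M$ by the value at $\mathsf g_f$. The dependence of $\mathcal C$ on $A$ and $G$ and the regularity caveats you flag are exactly those acknowledged in the paper, so the argument is correct and essentially identical.
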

The main point is that $\mathcal C$ has an explicit expression in terms of $A$ and $G$, which does not degenerate to  zero if $f\in\mathfrak M$. The critical case \hbox{$p=p^\star$} can also be covered up to an additional scaling: see~\cite[Chapter~6]{BDNS2021}.
\begin{thm}[Stability in Sobolev inequalities, \cite{BDNS2021}]\label{Thm:Main} Let $d\ge3$ and $A>0$.There is an explicit $\mathcal C$ such that, for any nonnegative function $f\in\mathrm L^{2^*}\big(\mathbb R^d,(1+|x|^2)\,dx\big)$ such that $\nabla f\in\mathrm L^2(\mathbb R^d)$ and
\[
\ird{(1,x, |x|^2)\,f^{2^*}}=\ird{(1,x,|x|^2)\,\mathsf g}\quad\mbox{and}\quad
\sup_{r>0}r^d\int_{|x|>r}\,f^{2^*}\,dx\le A\,,
\]
we have
\be{stability-fisher}
\nrm{\nabla f}2^2-\mathsf S_d\,\nrm f{2^*}^2\ge\mathcal C\ird{\left|\tfrac2{d-2}\,\nabla f+f^\frac d{d-2}\,\nabla\mathsf g^{-\frac2{d-2}}\right|^2}\,.
\ee
\end{thm}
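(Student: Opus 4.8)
The plan is to deduce Theorem~\ref{Thm:Main} from the Gagliardo-Nirenberg stability result of Theorem~\ref{Thm:stabilityDraft2} by a limiting argument as $p\to p^\star=d/(d-2)$, combined with the scaling invariance of the Sobolev inequality. First I would recall that when $p=p^\star$ one has $\theta=1$, $2p=2^*$, $p+1=2d/(d-2)$, and $\gamma=1$, so that the functional $\delta[f]$ from Section~\ref{Sec:GNS-stability} degenerates: the term $\nrm f{p+1}^{p+1}$ no longer carries a free scaling parameter, and $\delta[f]$ collapses (up to the universal factor $(p-1)^2$ and the normalization of $\mathsf S_d$) to a multiple of $\nrm{\nabla f}2^2-\mathsf S_d\,\nrm f{2^*}^2$. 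The point is that in the critical case the manifold $\mathfrak M$ is the full Aubin-Talenti manifold and the relative entropy $\mathcal E[f]$ is controlled, after subtracting off the optimal $\mathsf g_f\in\mathfrak M$, by the relative Fisher information appearing on the right-hand side of~\eqref{stability-fisher}.

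The key steps, in order, would be: (1) reduce to the case where $f$ has been rescaled so that $\ird{(1,x,|x|^2)\,f^{2^*}}=\ird{(1,x,|x|^2)\,\mathsf g}$; by the dilation and multiplicative invariance of both sides of~\eqref{SobolevRd} and of the Fisher-information functional, this normalization is free, and it fixes $\mathsf g_f=\mathsf g$. (2) Run the fast-diffusion flow~\eqref{RFD} with $m=m_1=(d-1)/d$, i.e. $v_0=f^{2^*}$, using the entropy-entropy production inequality~\eqref{EEPineq} together with the three pillars already established: the initial-time-layer estimate of Lemma~\ref{Lem:ITlayer}, the asymptotic-time-layer estimate of Lemma~\ref{Prop:Gap}, and the explicit threshold-time bound $T_\star$ of Theorem~\ref{Prop:TT}, whose hypotheses~\eqref{hyp:Harnack.self} are exactly the assumptions $A<\infty$ and $\mathcal F[v_0]\le G$ imposed in the statement (note $\mathcal F[f^{2^*}]$ and $\mathcal E[f]$ agree up to constants when $\mathsf g_f=\mathsf g$). (3) Chain these to get $\mathcal Q[v(t,\cdot)]\ge 4+4\,\eta\,e^{-4T_\star}/(4+\eta-\eta\,e^{-4T_\star})=:4+\varepsilon_\star$ for all $t\ge0$, hence the improved entropy production inequality $\mathcal I[v_0]\ge(4+\varepsilon_\star)\,\mathcal F[v_0]$, i.e. $\delta[f]\ge\varepsilon_\star\,\mathcal F[f^{2^*}]$ up to explicit constants. (4) Translate $\mathcal F[f^{2^*}]$ into a lower bound on $\ird{|\tfrac2{d-2}\nabla f+f^{d/(d-2)}\nabla\mathsf g^{-2/(d-2)}|^2}$: since the flow-integrated remainder in~\eqref{F}-type identities controls $\mathcal I-4\mathcal F$ from below by the squared relative Fisher information, and the Hardy-Poincar\'e inequality~\eqref{HP-PNAS} (with the two extra orthogonality conditions guaranteed by the normalization in step (1)) links $\mathcal F$ back to the linearized Fisher information, one obtains $\mathcal F[f^{2^*}]\ge c\ird{|\tfrac2{d-2}\nabla f+f^{d/(d-2)}\nabla\mathsf g^{-2/(d-2)}|^2}$ with explicit $c$. (5) Finally, pass from the GNS functional $\delta$ to the Sobolev deficit: in the critical case $\delta[f]$ equals $(p^\star-1)^2$ times $\bigl(\nrm{\nabla f}2^2-\mathsf S_d\,\nrm f{2^*}^2\bigr)$ after using the normalization to absorb the lower-order term, yielding~\eqref{stability-fisher} with $\mathcal C$ built from $\varepsilon_\star$, $c$ and $(p^\star-1)^2$.

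The main obstacle, as flagged by the remark that ``the critical case can be covered up to an additional scaling,'' is the degeneracy at $p=p^\star$: the GNS functional loses its second free parameter, so the clean equivalence $\delta[f]\ge0\iff\eqref{GNS}$ used in the subcritical range must be replaced by the scaling-reduced version, and one must check that the constants $\taustar$, $\mathsf a$, $\chi$, $\eta$, $\varepsilon_0$ entering $T_\star$ remain finite and explicit in the limit $m\uparrow m_1$ (equivalently $p\uparrow p^\star$). Concretely, the delicate point is verifying that the threshold-time estimate of Theorem~\ref{Prop:TT} survives at the endpoint $m=m_1$ — the Barenblatt profile $\mathcal B=(1+|x|^2)^{1/(m_1-1)}=(1+|x|^2)^{-d}$ is exactly $\mathsf g^{2^*}$, which is integrable but borderline, so the global Harnack Principle and the Moser-type iteration behind~\eqref{uniformFDr} must be applied with uniform-in-$m$ constants; this is precisely where the tail condition $\sup_r r^d\int_{|x|>r}f^{2^*}dx\le A$ is needed and, as noted, is also necessary.

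I would also double-check one subtlety: the orthogonality $\ird{x\,h\,\mathcal B^{2-m}}=0$ required for the improved Hardy-Poincar\'e inequality~\eqref{HP-PNAS} follows from the first-moment matching $\ird{x\,f^{2^*}}=\ird{x\,\mathsf g}$ only at the linearized level, so the reduction from $\mathcal Q[v(t,\cdot)]\ge 4+\varepsilon_\star$ to the squared-Fisher-information lower bound must be organized so that the linearization is performed along the flow (where the center of mass is conserved by~\eqref{RFD}), not merely at $t=0$; this is handled exactly as in the subcritical proof in~\cite{BDNS2021}, and the additional scaling in step (1) is what makes the argument self-contained in the critical case.
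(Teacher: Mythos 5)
Your overall architecture---normalize $f$, run the fast diffusion flow with $m=m_1$ on $v_0=f^{2^*}$, and chain the initial time layer of Lemma~\ref{Lem:ITlayer}, the asymptotic time layer of Lemma~\ref{Prop:Gap} and the threshold time estimate of Theorem~\ref{Prop:TT} into an improved entropy--entropy production inequality $\mathcal I[v_0]\ge(4+\varepsilon_\star)\,\mathcal F[v_0]$ with $\varepsilon_\star=4\,\eta\,e^{-4T_\star}/(4+\eta-\eta\,e^{-4T_\star})$---is exactly the strategy of Section~\ref{Sec:GNS-stability} and of~\cite{BDNS2021}. However, your step (4) is wrong as stated, and it is the step that is supposed to produce the right-hand side of~\eqref{stability-fisher}. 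You claim a lower bound $\mathcal F[f^{2^*}]\ge c\ird{\left|\tfrac2{d-2}\,\nabla f+f^{d/(d-2)}\,\nabla\mathsf g^{-2/(d-2)}\right|^2}$, i.e.\ that the relative entropy controls the relative Fisher information from below. No such inequality holds: both~\eqref{EEPineq} and the Hardy--Poincar\'e inequality you invoke go in the opposite direction (Fisher information controls entropy), and at the linearized level $\mathsf F[h]$ cannot dominate $\mathsf I[h]$ (take $h$ highly oscillatory). The correct passage is purely algebraic and never requires bounding $\mathcal F$ from below: from $\mathcal I[v_0]\ge(4+\varepsilon_\star)\,\mathcal F[v_0]$ one gets $4\,\mathcal F[v_0]\le\tfrac4{4+\varepsilon_\star}\,\mathcal I[v_0]$, hence $\delta[f]=\mathcal I[v_0]-4\,\mathcal F[v_0]\ge\tfrac{\varepsilon_\star}{4+\varepsilon_\star}\,\mathcal I[v_0]$, and $\mathcal I[v_0]$ is, up to an explicit constant, precisely the relative Fisher information on the right of~\eqref{stability-fisher}. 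With this replacement, your steps (1)--(3) and (5) assemble into the intended proof.

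Two secondary points. First, Theorem~\ref{Thm:Main} assumes only the tail bound $A$ and the moment normalization, whereas you import the hypothesis $\mathcal F[v_0]\le G$ from~\eqref{hyp:Harnack.self} as if it were part of the statement; in the critical case one must explain why $G$ can be dispensed with (under the normalization the linear part of $\mathcal F[v_0]$, namely $\ird{(1+|x|^2)(v_0-\mathcal B)}$, cancels, and the remaining term is controlled), otherwise the constant would not depend on $A$ alone. Second, the ``additional scaling'' is not the initial normalization of your step (1): for $m=m_1$ the improved Hardy--Poincar\'e inequality~\eqref{HP-PNAS} requires the extra orthogonality $\ird{|x|^2\,h\,\mathcal B^{2-m}}=0$, and the second moment is \emph{not} conserved by~\eqref{RFD}, so a time-dependent dilation must be inserted along the flow to enforce it; this is the actual crux of~\cite[Chapter~6]{BDNS2021} at $p=p^\star$. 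You correctly flag the first-moment issue, but it is the second moment that forces the modification of the argument in the critical case.
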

The stability constant is $\mathcal C$ in~Theorem~\ref{Thm:Main} depends only on~$A$. In Theorems~\ref{Thm:stabilityDraft2} and~\ref{Thm:Main}, the stability is measured by the \emph{relative Fisher information}, that is,~by
\[
\ird{\left|(p-1)\,\nabla f+f^p\,\nabla \varphi^{1-p}\right|^2}\,.
\]
Such a quantity differs from the distance to $\mathfrak M$ in~\eqref{Bianchi-Egnell} and deserves some comments. A simple expansion of the square and an optimization under scalings proves the following nonlinear extension of the \emph{Heisenberg uncertainty principle},
\be{Heisenberg}
\(\frac d{p+1}\ird{f^{p+1}}\)^2\le\ird{|\nabla f|^2}\ird{|x|^2\,f^{2p}}\,.
\ee
As a consequence of~\eqref{GNS}, \emph{i.e.}, of $\delta[f]\ge0$, we have that 
\[
\ird{\left|(p-1)\,\nabla f+f^p\,\nabla \varphi^{1-p}\right|^2}\ge4\,\frac{p-1}{p+1}\,\mathcal E[f]\,.
\]
According to~\cite[Lemma~1.7]{BDNS2021}, by the \emph{Pinsker-Csisz\'ar-Kullback inequality}, $\mathcal E[f]$ controls a more standard distance between $f^{2p}$ and $\mathsf g_f^{2p}$, namely
\[
\mathcal E[f]\ge\frac{p+1}{8\,p}\(\ird{\mathsf g_f^{3p-1}}\)^{-1}\,\left|f^{2p}-\mathsf g_f^{2p}\right|_1^2\,.
\]

\section{Symmetry and symmetry breaking in Caffarelli-Kohn-Nirenberg inequalities}\label{Sec:CKN}

This section is devoted to Theorem~\ref{Thm:CKN}. The interpretation of~\eqref{CKN} as an entropy-entropy production inequality paves the way to a flow method which generalizes the computations done for the fast diffusion equation in Section~\ref{Sec:FDE}. By the \emph{carr\'e du champ} method, we are able to reduce the symmetry issue to a simple dichotomy:\\
$\rhd$ either the entropy-entropy production inequality is linearly unstable around the Barenblatt type profiles and optimal functions are not radially symmetric,\\
$\rhd$ or the linearized entropy-entropy production inequality admits a spectral gap which also enters in the \emph{carr\'e du champ} estimates, in the nonlinear regime, as for the fast diffusion equation, and then the inequality holds with precisely the spectral gap as optimal constant. Using the analogy with Section~\ref{Sec:SphereFlows}, we are again in a case for which $\kappa_2=\kappa_0$. More details can be found in~\cite[Section~4.2]{DEL-JEPE}.

\subsection{Symmetry versus symmetry breaking}

Let us consider the \emph{critical Caffarelli-Kohn-Nirenberg inequalities}~\eqref{CKN}. In dimension $d\ge2$, it has been proved in~\cite{Catrina2001} that an optimal function $f\ge0$ exists, if $b<a+1$, and also $b>a$ if $a<0$. With the appropriate normalization, $f$ solves
\be{equation}
-\mathrm{div}\big(\kern 0.5pt|x|^{-2a}\,\nabla f\big) = |x|^{-\kern 0.5pt b\kern 0.5pt p}\,f^{p-1} \,.
\ee
The \emph{symmetry versus symmetry breaking issue} is to decide whether or not $f$ is radially symmetric. The answer is given by Theorem~\ref{Thm:CKN}, but a slightly more general \emph{rigidity result} is given in~\cite{Dolbeault2016}: if $ b\le b_{\,\rm FS}(a)<b<a+1$, up to a scaling, $\mathsf g$ as defined in~\eqref{g-CKN} is the unique positive solution of~\eqref{equation}. This symmetry result cannot be handled so far with symmetrization methods: in the range of $(a,b)$~for~which
\[
u\mapsto\ird{\frac{|\nabla u|^2}{|x|^{2\,a}}}-\frac{\nrm{\nabla\mathsf g\,|x|^{-a}}2^2}{\nrm{\mathsf g\,|x|^{-b}}p^2}\(\ird{\frac{|u|^p}{|x|^{b\,p}}}\)^{2/p}
\]
is linearly stable around $\mathsf g$, a local property, then $\mathsf g$ is a global minimizer and, as a consequence, $\mathsf C_{a,b}=\nrm{\nabla\mathsf g\,|x|^{-a}}2^2/\nrm{\mathsf g\,|x|^{-b}}p^2$.

\subsection{Strategy of the proof}\label{Sec:CKNstrategy}

We summarize here the strategy of the proof in~\cite{Dolbeault2016}, which we decompose in four steps.

\subsubsection{A modified Sobolev inequality}\label{Sec:CKN-S}

Let us use spherical coordinates with $r=|x|$ and $\omega=x/r$ for any $x\in\R^d\setminus\{0\}$. With the change of variables
\[
f(r,\omega)=F(s,\omega)\quad\mbox{with}\quad s=r^\alpha\,,
\]
Inequality~\eqref{CKN} is transformed into a Sobolev type inequality
\be{sob}
\irdmu{|\D F|^2}\ge\mathsf C^{\kern 0.25pt d}_{a,b}\,\alpha^{1-\frac2p}\(\irdmu{|F|^p}\)^\frac2p
\ee
where $d\mu=s^{n-1}\,ds\,d\omega=|x|^{n-d}\,dx$,
\[
\D F:=\(\alpha\,\frac{\partial F}{\partial s},\frac1s\,\nabla_\omega F\)
\]
so that $|\D F|^2=\alpha^2\,\big(\frac{\partial F}{\partial s}\big)^2+\frac{|\nabla_\omega F|^2}{s^2}$, and $\alpha$ and $n$ are two parameters defined by
\[
\alpha=\frac{(1+a-b)\,(a_c-a)}{a_c-a+b}\,,\quad n=\frac{2\,p}{p-2}\,.
\]
Here $n$ plays the role of an artificial dimension while $\alpha$ reflects a discrepancy between derivatives with respect to the radial and angular variables. The parametrization by $a$ and $b$ of~\eqref{CKN} is now replaced by the parametrization by $\alpha>0$ and $n>d$. The Felli \& Schneider curve $b=b_{\kern 0.5pt\rm FS}(a)$ becomes $\alpha=\alpha_{\rm FS}$ in the new set of parameters $\alpha$ and $n$, with
\[
\alpha_{\rm FS}:=\sqrt{\frac{d-1}{n-1}}\,.
\]
For $\alpha>\alpha_{\rm FS}$ the minimizers are not radial. With $u=|F|^p=\P^{-n}$, Inequality~\eqref{sob} becomes
\be{sob2}
\irdmu{u\,|\D \P|^2}\ge\frac{4\,\alpha^{1-\frac2p}}{(n-2)^2}\,\mathsf C^{\kern 0.25pt d}_{a,b}\(\irdmu u\)^\frac2p
\ee
with $p=2\,n/(n-2)$ while~\eqref{equation} is transformed into
\be{sobequn}
-\,\mathcal L\,F=F^{p-1}\quad\mbox{where}\quad\mathcal L:=-\,\Dstar\cdot\D\,.
\ee

\subsubsection{The flow}\label{Sec:Flow}

The fast diffusion flow
\be{flow}
\frac{\partial\kern 0.5pt u}{\partial t}=\mathcal L\,u^{1-\frac1n}=(1-n)\,\Dstar\cdot\big(u\,\D\P\big)
\ee
is similar to the flow studied in Section~\ref{FDE:Renyi} and admits self-similar solutions
\[
B(t,x)=t^{-n}\(c+\frac{|x|^2}{2\,(n-1)\,\alpha^2\,t^2}\)^{-n}
\]
where $c>0$ can be adjusted so that $\irdmu{B(t,x)}=$ for any given $M>0$. Let us notice that $M=\irdmu u$ is conserved by~\eqref{flow} and formally compute
\[
\frac d{dt}\irdmu{u\,|\D\P|^2}=-\,2\irdmu{\(\frac12\,\mathcal L\,|\D\P|^2-\D\P\cdot\D\mathcal L\,\P-\frac1n\,(\mathcal L\,\P)^2\)}\,.
\]
Hence, if $u$ is optimal for~\eqref{sob2}, it is clear that $\frac d{dt}\irdmu{u\,|\D\P|^2}=0$.

\subsubsection{The \emph{carr\'e du champ} method}\label{Sec:CCM}

Let us denote by $\nabla$ the gradient with respect to the angular variable $\omega\in\S^{d-1}$ and by $\Delta$ the Laplace-Beltrami operator on $\S^{d-1}$. The following algebraic computation is adapted from~\cite{Dolbeault2016} and extends to~\eqref{flow} the \emph{carr\'e du champ} method. With $n>d\ge3$ and the notation $'=\partial_s$, we have
\begin{multline*}
\frac12\,\mathcal L\,|\D\P|^2-\D\P\cdot\D\mathcal L\,\P-\frac1n\,(\mathcal L\,\P)^2\\
=\alpha^4\,\frac{n-1}n\(\P''-\frac{\P'}r-\frac{\Delta\kern 0.5pt\P}{\alpha^2\,(n-1)\,r^2}\)^2
+\frac{2\,\alpha^2}{r^2}\Big|\nabla\P'-\frac{\nabla\P}r\Big|^2\\
+\,\frac1{r^4}\(\frac12\,\Delta\kern 0.5pt|\nabla\P|^2-\nabla\P\cdot\nabla\Delta\kern 0.5pt\P-\frac1{n-1}(\Delta\kern 0.5pt\P)^2-(n-2)\,\alpha^2\,|\nabla\P|^2\)\,.
\end{multline*}
If $\P$ is a positive function in $C^3(\mathbb S^{d-1})$ and $d\ge3$, the computation of Section~\ref{Sec:SphereFlows} can be adapted to prove that
\begin{eqnarray*}
&&\kern-24pt\int_{\mathbb S^{d-1}} \(\frac12 \,\Delta\kern 0.5pt|\nabla \P|^2 - \nabla \P \cdot \nabla \Delta\kern 0.5pt \P - \frac1{n-1}(\Delta\kern 0.5pt \P)^2 - (n-2)\,\alpha^2\,|\nabla \P|^2\) \P^{1-n}\,d\omega\\
&=&\frac{(n-2)\,(d-1)}{(n-1)\,(d-2)}\int_{\mathbb S^{d-1}}|\mathsf Q|^2\,\P^{1-n}\,d\omega\\
&&+\,\frac{n-d}{2\,(d+1)} \(\frac{n+3}2 + \frac{3\,(n-1)\,(n+1)\,(d-2)}{2\,(n-2)\,(d+1)} \) \int_{\mathbb S^{d-1}} { \frac{|\nabla \P|^4}{\P^2} \P^{1-n}}\,d\omega\\
&&+\,(n-2) \(\alpha_{\rm FS}^2 - \alpha^2\) \int_{\mathbb S^{d-1}}{ |\nabla \P|^2\,\P^{1-n}}\,d\omega
\end{eqnarray*}
where $\mathsf Q:=(\nabla\otimes\nabla)\,\P-\frac1{d-1}\,(\Delta\kern 0.5pt\P)\,\mathrm{Id}-\frac{3\,(n-1)\,(n-d)}{2\,(n-2)\,(d+1)}\(\frac{\nabla\P\otimes\nabla\P}\P-\frac1{d-1}\,\frac{|\nabla\P|^2}\P\,\mathrm{Id}\)$. For the case $d=2$ we refer the reader to~\cite{Dolbeault2016}.
\begin{lem} \label{Carre} Assume that $d\ge3$. If $u$ is a smooth enough and sufficiently decaying function as $|x|=r\to+\infty$, then
\begin{eqnarray*}
&&\kern-24pt\irdmu{\(\frac12\,\mathcal L\,|\D\P|^2-\D\P\cdot\D\mathcal L\,\P-\frac1n\,(\mathcal L\,\P)^2\)}\\
&=&\alpha^4\,\frac{n-1}n\irdmu{\(\P''-\frac{\P'}r-\frac{\Delta\kern 0.5pt\P}{\alpha^2\,(n-1)\,r^2}\)^2}
+2\,\alpha^2\irdmu{\frac1{r^2}\Big|\nabla\P'-\frac{\nabla\P}r\Big|^2}\\
&&+\,\frac{(n-2)\,(d-1)}{(n-1)\,(d-2)}\irdmu{\frac1{r^4}\,|\mathsf Q|^2\,\P^{1-n}}\\
&&+\,\frac{n-d}{2\,(d+1)}\(\frac{n+3}2+\frac{3\,(n-1)\,(n+1)\,(d-2)}{2\,(n-2)\,(d+1)}\)\irdmu{\frac1{r^4}\,\frac{|\nabla\P|^4}{\P^2}\P^{1-n}}\\
&&+\,(n-2) \(\alpha_{\rm FS}^2-\alpha^2\)\irdmu{\frac1{r^4}|\nabla\P|^2\,\P^{1-n}}\,.
\end{eqnarray*}
\end{lem}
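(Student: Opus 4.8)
The plan is to integrate the pointwise algebraic identity stated just before the lemma — the $n$-dimensional Bochner-type identity for the operator $\mathcal L$ attached to the flow~\eqref{flow} — against $d\mu=r^{n-1}\,dr\,d\omega$, and to reduce its purely angular part to a Bochner computation on $\mathbb S^{d-1}$. In spherical coordinates one has $|\D\P|^2=\alpha^2\,(\P')^2+r^{-2}\,|\nabla\P|^2$ and $\mathcal L\,\P=\alpha^2\big(\P''+\tfrac{n-1}{r}\,\P'\big)+r^{-2}\,\Delta\P$, so that expanding $\tfrac12\mathcal L\,|\D\P|^2-\D\P\cdot\D\mathcal L\,\P-\tfrac1n(\mathcal L\,\P)^2$ and regrouping produces exactly the three summands quoted: the square of the radial Hessian $\P''-\P'/r-\Delta\P/(\alpha^2(n-1)r^2)$ with weight $\alpha^4\,\tfrac{n-1}{n}$, the mixed square $2\alpha^2\,r^{-2}\,|\nabla\P'-\nabla\P/r|^2$, and a purely angular fourth-order differential expression divided by $r^4$. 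Upon integration the first two summands pass through the $r$- and $\omega$-integrations unchanged and yield the first two terms on the right-hand side of the lemma, so everything reduces to computing $\irdmu{r^{-4}\,\P^{1-n}\big(\tfrac12\Delta|\nabla\P|^2-\nabla\P\cdot\nabla\Delta\P-\tfrac1{n-1}(\Delta\P)^2-(n-2)\,\alpha^2\,|\nabla\P|^2\big)}$, the weight $\P^{1-n}=u^{1-1/n}$ being the one carried along from the entropy-production computation of $\tfrac{d}{dt}\irdmu{u\,|\D\P|^2}$.

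For this remaining term I would freeze $r$ and regard $\P(r,\cdot)$ as a positive $C^3$ function on the closed manifold $\mathbb S^{d-1}$, where $\mathrm{Ric}=(d-2)\,g$. Applying the Bochner formula~\eqref{BLW} to $\P$ on $\mathbb S^{d-1}$ and integrating by parts — which yields no boundary terms — one rewrites $\int_{\mathbb S^{d-1}}\big(\tfrac12\Delta|\nabla\P|^2-\nabla\P\cdot\nabla\Delta\P\big)\P^{1-n}\,d\omega$ as integrals, all against $\P^{1-n}\,d\omega$, of $|\mathrm{Hess}\,\P|^2$, of $(\Delta\P)^2$, of $\Delta\P\,|\nabla\P|^2/\P$, and of $|\nabla\P|^4/\P^2$ — the last two arising precisely from the derivatives that land on the weight $\P^{1-n}$. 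One then completes the square in the trace-free tensor $\mathsf Q=(\nabla\otimes\nabla)\P-\tfrac1{d-1}(\Delta\P)\,\mathrm{Id}-\tfrac{3(n-1)(n-d)}{2(n-2)(d+1)}\big(\tfrac{\nabla\P\otimes\nabla\P}{\P}-\tfrac1{d-1}\,\tfrac{|\nabla\P|^2}{\P}\,\mathrm{Id}\big)$, which is rigged so that the coefficient in front of $\int|\mathsf Q|^2\,\P^{1-n}$ comes out to be $\tfrac{(n-2)(d-1)}{(n-1)(d-2)}$; the residual $|\nabla\P|^4/\P^2$-term then carries the stated coefficient $\tfrac{n-d}{2(d+1)}\big(\tfrac{n+3}{2}+\tfrac{3(n-1)(n+1)(d-2)}{2(n-2)(d+1)}\big)$, and the leftover multiple of $\int|\nabla\P|^2\,\P^{1-n}$ is $(n-2)\,(\alpha_{\rm FS}^2-\alpha^2)$ — the Felli--Schneider value $\alpha_{\rm FS}^2=(d-1)/(n-1)$ being by design exactly the one at which this last coefficient vanishes, which is the origin of the symmetry versus symmetry-breaking dichotomy. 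Multiplying this spherical identity by $r^{n-5}$ and integrating over $r\in(0,\infty)$ reassembles the last three terms of the lemma.

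I expect the main obstacle to be twofold. On the algebraic side, one must carry out the nested completions of squares — first in the radial/angular splitting, then inside the spherical integral — so that no remainder is left and the exact, rather heavy coefficients quoted in the statement (as well as the precise shape of $\mathsf Q$) actually appear; this is essentially the computation of~\cite{Dolbeault2016}, repeated with the extra weight $\P^{1-n}$, and is lengthy but mechanical. The genuinely delicate point is the analytic justification for the admissible class of $u$: one has to verify that the smoothness and decay assumptions on $u$ — hence on $\P=u^{-1/n}$, which grows like $|x|^{2}$ up to constants for the Barenblatt-type profiles driving~\eqref{flow} — make all the integrals finite and, above all, annihilate the boundary contributions at $r=0$ and $r\to\infty$ in the radial integrations by parts hidden inside the pointwise identity, the angular integrations by parts requiring no such care since $\mathbb S^{d-1}$ has no boundary. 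A convenient way to make this rigorous is to establish the identity first for the self-similar profiles and for smooth perturbations supported in a fixed compact subset of $\R^d\setminus\{0\}$, where every manipulation is manifestly legitimate, and then to remove the restriction by an approximation argument in the relevant weighted Sobolev space.
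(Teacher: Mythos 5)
Your proposal follows the paper's own route exactly: the pointwise radial/angular decomposition of $\tfrac12\,\mathcal L\,|\D\P|^2-\D\P\cdot\D\mathcal L\,\P-\tfrac1n\,(\mathcal L\,\P)^2$ into the two squared terms plus the purely angular Bochner expression, then the spherical identity on $\mathbb S^{d-1}$ obtained from the Bochner--Lichnerowicz--Weitzenb\"ock formula with $\mathrm{Ric}=(d-2)\,g$ and the completion of the square in $\mathsf Q$, and finally the integration against $r^{n-5}\,dr\,d\omega$ --- which is precisely how the two displayed identities preceding the lemma are combined in the paper. The regularity and decay caveats you flag are exactly those the paper itself defers to Section~\ref{Sec:Reg} and to the elliptic estimates of~\cite{Dolbeault2016}.
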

Up to regularity issues and decay properties as $|x|\to+\infty$, we conclude that a solution of~\eqref{flow} such that $\frac d{dt}\irdmu{u\,|\D\P|^2}=0$ satisfies $\nabla\P=0$ and \hbox{$\P''-\P'/r=0$} if $\alpha\le\alpha_{\rm FS}$, which means that it takes the form $u^{-n}=\P=\mathsf a+\mathsf b\,r^2$ for some $t$-dependent coefficients $\mathsf a$ and~$\mathsf b$. Inserting this expression in~\eqref{flow}, we obtain $u=B$, up to a scaling, that is, a translation in time in self-similar variables.

\subsubsection{Regularity issues}\label{Sec:Reg}
The above computation is so far formal, as integrations by parts are not justified. In~\cite{Dolbeault2016}, the regularity and decay properties needed for the method are established for positive solutions of~\eqref{sobequn} by elliptic estimates. In~\cite{DEL-JEPE}, partial results are obtained for the parabolic point of view using self-similar variables as in Section~\ref{Sec:Self-Similar}. A complete parabolic proof is however still missing.

\section{Conclusion and open problems}\label{Sec:Conclusion}

Entropy methods provide a framework which is extremely convenient for solving some fundamental variational problems and identifying the optimal constant in the corresponding functional inequalities. In some cases, evolving the functional by a nonlinear diffusion amounts to test the equation by a complicated expression involving a second order differential operator, powers of the solution and eventually homogeneous weights, which gives rises to delicate and complex computations. The use of a flow and of the corresponding entropy functional provides a framework to reinterpret the various terms and order the computations, as in Section~\ref{Sec:CKNstrategy}. Identifying the optimality cases becomes also much easier and the stationarity with respect to the flow even provides uniqueness results which would be otherwise difficult to guess or to reduce to more standard techniques. Parabolic equations come with some extra properties, like regularization effects, which are crucial to obtain the stability results of Section~\ref{Sec:GNS-stability}. However, its major advantage is to relate the nonlinear regime with the linearized one around self-similar or stationary solutions, using quantities which are meaningful in the two cases: from the gradient flow point of view in the nonlinear regime and in terms of orthogonality in the linear regime. It is a remarkable feature that the use of a parabolic flow allows to bypass symmetrization techniques. This opens new directions of research in non-real valued problems (complex valued functions in quantum mechanics, in presence of magnetic fields, see for instance~\cite{Bonheure_2019,Bonheure_2020}, or systems of equations).

Entropy methods have been introduced in partial differential equations, see~\cite{MR1842428}, in order to handle systems of charged particles, with the aim of obtaining rates in kinetic and related equations. There is a whole area of research which has emerged during the last 20 years under the name of \emph{hypocoercivity}, with $\mathrm H^1$ methods (see~\cite{Mem-villani}) or $\mathrm L^2$ methods (see~\cite{DMS-2part,arnold2021sharpening}). In $\mathrm L^2$ hypocoercive approaches, there is a simple strategy: if the kinetic equation admits a diffusion limit (under the appropriate parabolic scaling) whose asymptotic behaviour is governed by a functional inequality, then the corresponding rates of convergence or decay can be reimported in the kinetic equation: see for instance~\cite{BDLS2020,BDS-very-weak,BDMMS}. This is even true for systems with a non-local Poisson coupling, as shown in~\cite{Addala_2021}. However, identifying sharp rates and relating nonlinear models with their linearized counterparts, as can be done in the above examples of linear and nonlinear parabolic equations, is not done yet, even in the simplest benchmark cases considered in~\cite{arnold2021sharpening}.

\bigskip Let us conclude this paper by some open problems:
\\[4pt]
$\rhd$ \emph{Unconfined or weakly confined diffusions.} Diffusion equations without any external potential or with an external potential which is insufficient to balance the diffusion are now rather well understood and even the hypocoercive theory in corresponding kinetic equations is essentially under control, see~\cite{BDMMS,BDS-very-weak,BDLS2020}. In~\cite[Section~6]{BDMMS}, improved decay rates are obtained by Fourier estimates when moments of low order are set to zero. A spectral interpretation of these moment conditions, in absence of a bounded stationary solution and the corresponding functional inequalities, written as entropy-entropy production inequalities, is however still missing. Reinterpreting these inequalities in terms of spectral gaps would also be very useful. In kinetic theory, recent results of \cite{armstrong2019variational,brigati2021time} also point in this direction, with an explicit role played by a Poincar\'e-Lions type inequality.
\\[4pt]
$\rhd$ \emph{Branches of solutions, optimal constants and stability for Sobolev's inequality: a conjecture on the sphere.}
Let us consider the inequality
\be{Ineq:GNS2}
\nrmS{\nabla u}2^2+\frac\lambda{p-2}\,\nrmS u2^2\ge\frac{\mu(\lambda)}{p-2}\,\nrmS up^2\quad\forall\,u\in\mathrm H^1(\S^d,d\mu)
\ee
with optimal constant $\mu(\lambda)$. For an optimal function $u$, the corresponding Euler-Lagrange equation is
\be{EL:Sphere}
-\,(p-2)\,\Delta\,u+\lambda\,u=u^{p-1}
\ee
under the normalization condition $\nrmS up^{p-2}=\mu(\lambda)$. Since the optimal constant $\mu(\lambda)$ is defined as an infimum (in $u$) of an affine function of $\lambda$, $\lambda\mapsto\mu(\lambda)$ is a concave function. The key estimate is the \emph{rigidity} result: $u\equiv\lambda^{1/(p-2)}$ is the unique solution of~\eqref{EL:Sphere} if $\lambda\le d$, and as a consequence we know that $\mu(\lambda)=\lambda$.

The first positive eigenvalue of the Laplace-Beltrami operator $\Delta$ on $\S^d$ is $\lambda_1=d$. Let $\varphi_1$ be an associated eigenfunction. By taking as a test function $u_\varepsilon(z)=1+\varepsilon\,\varphi_1(z)$ in the limit as $\varepsilon\to0_+$, it is easy to see that $\lambda=\lambda_1$ is a \emph{bifurcation} point: there is a branch of non-constant solutions emerging from constant solutions at $\lambda=d$. Hence $d$ can be characterized as the minimum of $\{\lambda>0\,:\,\mu(\lambda)<\lambda\}$, and \emph{symmetry breaking}, in the sense of non-constant optimal functions, occurs if and only if $\lambda>d$. See Fig.~\ref{F2}.

For $p<2^*$, it is not very difficult to establish the existence of optimal functions with \emph{antipodal symmetry}, that is, of solutions which satisfy the condition
\[
u(x)=(-x)\quad\forall\,x\in\S^d\,.
\]
\begin{figure}[hb]\begin{center}
\begin{picture}(12,4.5)
\put(2,0){\includegraphics[width=8cm]{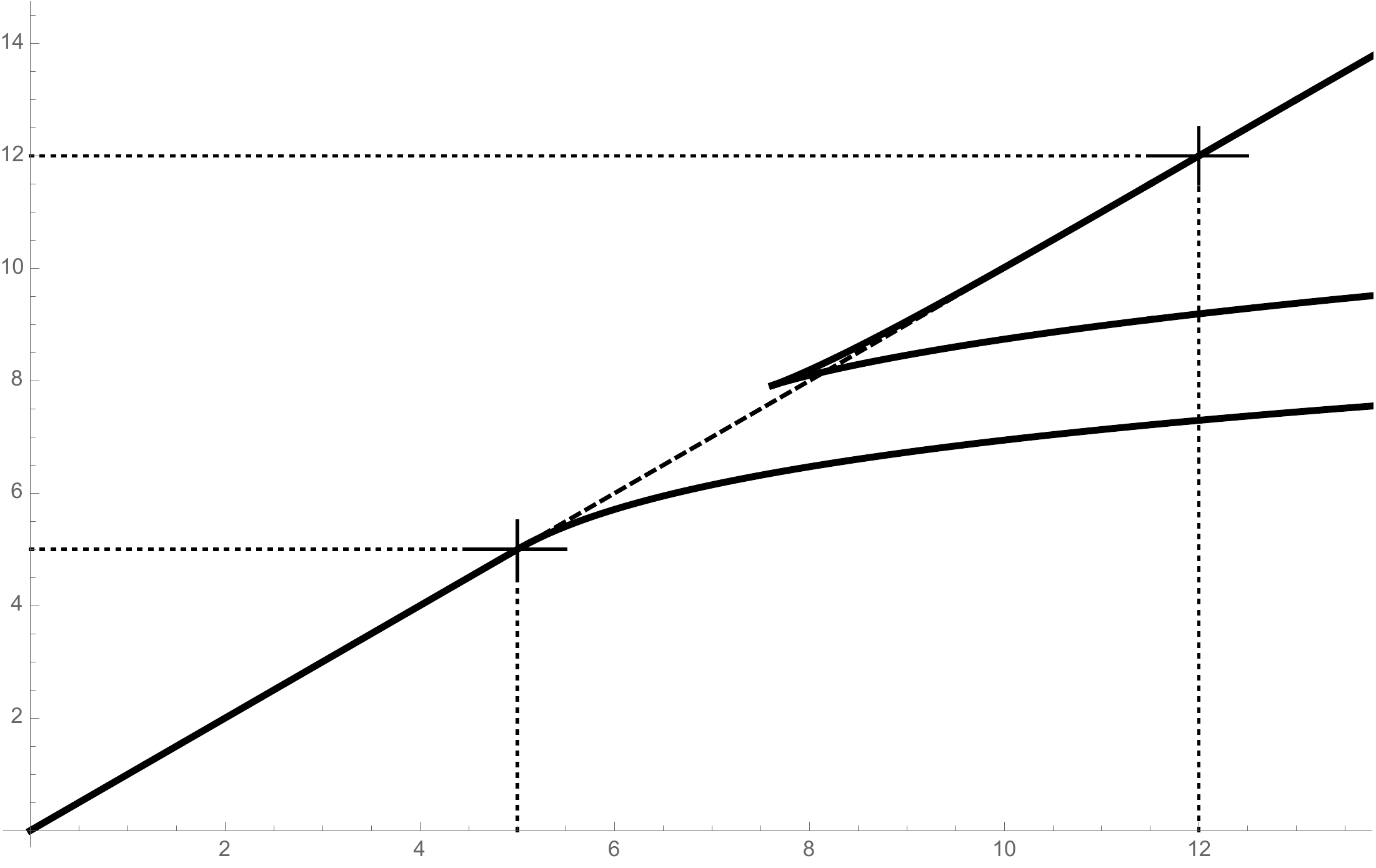}}
\put(9.6,0.5){$\lambda$}
\put(2.4,4.8){$\mu$}
\put(7,2.1){$\mu=\mu(\lambda)$}
\put(8.5,3.5){$\mu=\mu_a(\lambda)$}
\end{picture}
\caption{\label{F2} Case of the sphere in dimension $d=5$: in the subcritical case $p<2^*$, the optimal constant of~\eqref{Ineq:GNS2} is given by $\lambda\mapsto\mu(\lambda)$. The dashed line is the branch of constant solutions $u\equiv1$. The first bifurcation point is at $\lambda_1=d$. From the second bifurcation point, at $\lambda_2=2\,(d+1)$, emerges another branch with a turning point: they intersect at $\lambda=\kappa_p$. The curve $\lambda\mapsto\mu_a(\lambda)$ is obtained as the minimum of this branch and of the line $\mu=\lambda$.}
\end{center}
\end{figure}
\clearpage\noindent Let us denote by $\mathrm H^1_a(\S^d,d\mu)$ the subset of $\mathrm H^1(\S^d,d\mu)$ with such a symmetry. We can numerically compute the lowest value for which $\lambda=\mu_a(\lambda)$ among the antipodal solutions $u_\lambda$ of~\eqref{EL:Sphere}, where
\[
\mu_a(\lambda)=\frac{(p-2)\,\nrmS{\nabla u_\lambda}2^2}{\nrmS{u_\lambda}p^2-\nrmS{u_\lambda}2^2}\,.
\]
This can be done using partial symmetries and a convenient reparametrization of the branch emerging from $\lambda_2$. The branch is also shown on Fig.~\ref{F2}. The intersection with the line $\mu=\lambda$ is expected to determine the optimal constant $\kappa_p$ in the inequality
\be{improvedS}
\nrmS{\nabla u}2^2\ge\frac{\kappa_p}{p-2}\(\nrmS up^2-\nrmS u2^2\)\quad\forall\,u\in\mathrm H_a^1(\S^d,d\mu)
\ee
The curve $p\mapsto\kappa_p$ is shown on Fig.~\ref{F3}. The loss of compactness as $p\to2^*$ corresponds to a concentration on two antipodal points. Based on these numerical computations, we expect that $\kappa_{2^*}=2^{1-2/2^*}d=2^{2/d}\,d$. Because of the stereographic projection and after taking into account the normalization of the volume of the sphere ($d\mu$ is a probability measure), this supports the conjecture:
\conj{The optimal constant $\kappa$ in~\eqref{improvedS} for $p=2^*$, on $\S^d$, is $\kappa_{2^*}=2^{2/d}\,d$.}
Such a conjecture is an analogue of the Bianchi-Egnell estimate~\eqref{Bianchi-Egnell} written on~$\S^d$ instead of $\R^d$, where $\mathsf S_d$ is the optimal constant in~\eqref{SobolevRd}. It is compatible with the behaviour of the branches originating from $\lambda_2$, whose lower part converges to an horizontal line as $p\to2^*$. Such a phenomenon is similar to what has been observed in~\cite{0951-7715-27-3-435,FreefemDolbeaultEsteban} for Caffarelli-Kohn-Nirenberg inequalities.
\begin{figure}[ht]\begin{center}
\begin{picture}(12,4)
\put(3,0){\includegraphics[width=6cm]{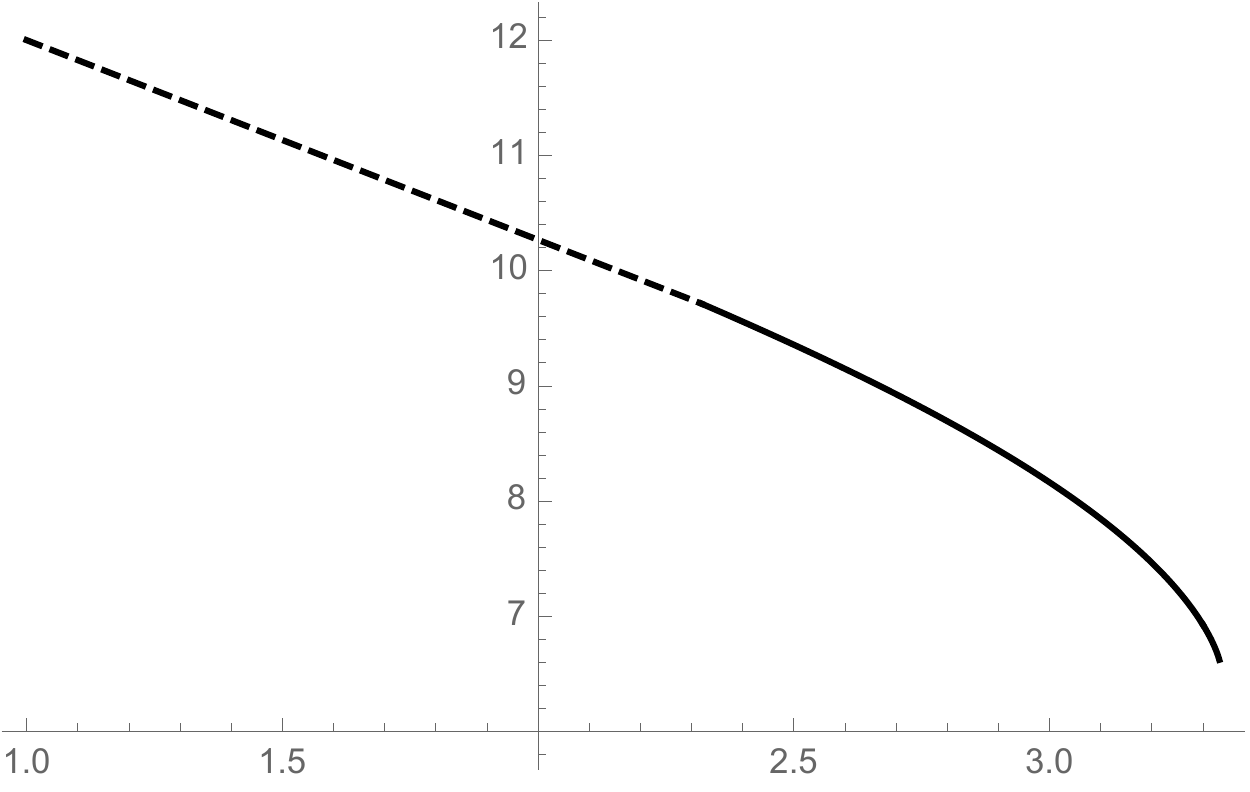}}
\put(9.3,0.2){$p$}
\put(5.8,3.6){$\kappa_p$}
\end{picture}
\caption{\label{F3} A numerical computation of the optimal constant $\kappa_p$ for~\eqref{improvedS} when $d=5$. Only a few points have been computed on the dashed curve. The value of $\kappa_1$ (at $p=1$) is known and equal to $\lambda_2=2\,(d+1)=12$. At $p=2^*$, we get that $\kappa_{10/3}\approx6.59754$.}
\end{center}
\end{figure}

\subsection*{Acknowledgments}
This work was supported by the Project EFI (ANR-17-CE40-0030) of the French National Research Agency (ANR). \emph{\copyright~2021 by the author. This paper may be reproduced, in its entirety, for non-commercial purposes.}
\bibliographystyle{siam}
\bibliography{Milan2021}
\end{document}